\theoremstyle{definition}
\newtheorem{defn}{Definition}[section]
\newtheorem{example}[defn]{Example}
\newtheorem{remark}[defn]{Remark}
\theoremstyle{plain}
\newtheorem{lemma}[defn]{Lemma}
\newtheorem{theorem}[defn]{Theorem}
\newtheorem{proposition}[defn]{Proposition}
\newtheorem{corollary}[defn]{Corollary}
\newcommand{\Z}{\mathbb{Z}}
\newcommand{\Q}{\mathbb{Q}}
\newcommand{\C}{\mathbb{C}}
\newcommand{\Proj}{\mathbb{P}}
\newcommand{\calO}{\mathcal{O}}
\newcommand{\calM}{\mathcal{M}}
\newcommand{\calX}{\mathcal{X}}
\newcommand{\calY}{\mathcal{Y}}
\newcommand{\calL}{\mathcal{L}}
\newcommand{\calS}{\mathcal{S}}
\DeclareMathOperator{\NS}{NS}
\DeclareMathOperator{\Pic}{Pic}
\DeclareMathOperator{\rank}{rank}
\DeclareMathOperator{\Gr}{Gr}
\DeclareMathOperator{\GL}{GL}
\DeclareMathOperator{\hcf}{hcf}
\DeclareMathOperator{\Orth}{O}
\begin{document}

\title[Calabi-Yau Threefolds Fibred by Mirror Quartic K3 Surfaces]{Calabi-Yau Threefolds Fibred by Mirror Quartic K3 Surfaces}

\author[C. F. Doran]{Charles F. Doran}
\address{Department of Mathematical and Statistical Sciences, 632 CAB, University of Alberta, Edmonton, AB, T6G 2G1, Canada}
\email{charles.doran@ualberta.ca}
\thanks{C. F. Doran and A. Y. Novoseltsev were supported by the Natural Sciences and Engineering Resource Council of Canada (NSERC), the Pacific Institute for the Mathematical Sciences (PIMS), and the McCalla Professorship at the University of Alberta}

\author[A. Harder]{Andrew Harder}
\address{Department of Mathematical and Statistical Sciences, 632 CAB, University of Alberta, Edmonton, AB, T6G 2G1, Canada}
\email{aharder@ualberta.ca}
\thanks{A. Harder was supported by an NSERC Post-Graduate Scholarship}

\author[A.Y. Novoseltsev]{Andrey Y. Novoseltsev}
\address{Department of Mathematical and Statistical Sciences, 632 CAB, University of Alberta, Edmonton, AB, T6G 2G1, Canada}
\email{novoselt@ualberta.ca}

\author[A. Thompson]{Alan Thompson}
\address{Department of Pure Mathematics, University of Waterloo, 200 University Ave West, Waterloo, ON, N2L 3G1, Canada}
\email{am6thomp@uwaterloo.ca}
\thanks{A. Thompson was supported by a Fields-Ontario-PIMS Postdoctoral Fellowship with funding provided by NSERC, the Ontario Ministry of Training, Colleges and Universities, and an Alberta Advanced Education and Technology Grant}
\date{}

\begin{abstract} We study threefolds fibred by mirror quartic K3 surfaces. We begin by showing that any family of such K3 surfaces is completely determined by a map from the base of the family to the moduli space of mirror quartic K3 surfaces. This is then used to give a complete explicit description of all Calabi-Yau threefolds fibred by mirror quartic K3 surfaces. We conclude by studying the properties of such Calabi-Yau threefolds, including their Hodge numbers and deformation theory.
\end{abstract}

\maketitle

\section{Introduction}

In this paper, along with its predecessor \cite{flpk3sm}, we take the first steps towards a systematic study of threefolds fibred by K3 surfaces, with a particular focus on Calabi-Yau threefolds. Our aim in this paper is to gain a complete understanding of a relatively simple case, where the generic fibre in the K3 fibration is a mirror quartic, to demonstrate the utility of our methods and to act as a test-bed for developing a more general theory.

We have chosen mirror quartic K3 surfaces (here ``mirror'' is used in the sense of Nikulin \cite{fagkk3s} and Dolgachev \cite{mslpk3s}) because, from a moduli theoretic perspective, they may be thought of as the simplest non-rigid lattice polarized K3 surfaces. Indeed, mirror quartic K3 surfaces are polarized by the rank $19$ lattice
\[M_2 := H \oplus E_8 \oplus E_8 \oplus \langle -4 \rangle,\]
so move in a $1$-dimensional moduli space. By \cite[Theorem 7.1]{mslpk3s}, this moduli space is isomorphic to the modular curve $\Gamma_0(2)^+ \setminus \mathbb{H}$, we denote its compactification by $\calM_{M_2}$.

Our first main result (Theorem \ref{gfithm}) will show that an $M_2$-polarized family of K3 surfaces (in the sense of \cite[Definition 2.1]{flpk3sm}) over a quasi-projective base curve $U$ is completely determined by its \emph{generalized functional invariant map} $U \to \calM_{M_2}$, which may be thought of as a K3 analogue of the classical functional invariant of an elliptic curve. This also explains why we choose to polarize our K3 surfaces by $M_2$ instead of $M_1 := H \oplus E_8 \oplus E_8 \oplus \langle -2 \rangle$, which at first would seem like a more obvious choice. Indeed, $M_1$-polarized K3 surfaces admit an antisymplectic involution that fixes the polarization, which means that the analogue of Theorem \ref{gfithm} does not hold for them; in analogy with elliptic curves again, the presence of an antisymplectic involution that fixes the polarization means that to uniquely determine an $M_1$-polarized family of K3 surfaces we would also need a \emph{generalized homological invariant}, to control monodromy around singular fibres, whereas for $M_2$-polarized familes the lack of such automorphisms means that the generalized functional invariant alone suffices.

A second reason for choosing mirror quartic K3 surfaces is the fact that the mirror quintic Calabi-Yau threefold admits a fibration by mirror quartics \cite[Theorem 5.10]{flpk3sm}. This makes fibrations by mirror quartic K3 surfaces particularly interesting for the study of Calabi-Yau threefolds; the majority of this paper is devoted to this study. Indeed, our second main result (Corollary \ref{XgCY3}) provides a complete explicit description of all Calabi-Yau threefolds that admit $M_2$-polarized K3 fibrations, and we compute Hodge numbers in all cases. Throughout this study we present the mirror quintic as a running example, thereby demonstrating that many of its known properties can be easily recovered from our theory, although we would like to note that our methods apply to a significantly broader class of examples of Calabi-Yau threefolds, many of which do not have known descriptions as complete intersections in toric varieties. Finally, we note that mirror symmetry for the Calabi-Yau threefolds constructed here is explored in \cite{mstdfcym}.
\medskip

The structure of this paper is as follows. In Section \ref{constructionsect} we begin by proving Theorem \ref{gfithm}, which shows that any $M_2$-polarized family of K3 surfaces is uniquely determined by its generalized functional invariant. In particular, this means that any $M_2$-polarized family of K3 surfaces is isomorphic to the pull-back of a fundamental family of $M_2$-polarized K3 surfaces, introduced in Section \ref{SIsect}, from the moduli space $\calM_{M_2}$. The remainder of Section \ref{constructionsect} is then devoted to showing how this theory can be used to construct Calabi-Yau threefolds, culminating in Corollary \ref{XgCY3}, which gives an explicit description of all Calabi-Yau threefolds that admit $M_2$-polarized K3 fibrations.

In Section \ref{Hodgesect} we begin our study of the properties of the Calabi-Yau threefolds constructed in Section \ref{constructionsect}, by computing their Hodge numbers. The main results are Proposition \ref{Xgh11}, which computes $h^{1,1}$, and Corollary \ref{h21cor}, which computes $h^{2,1}$.

Section \ref{defsect} is devoted to a brief study of the deformation theory of the Calabi-Yau threefolds constructed in Section \ref{constructionsect}. The main result is Proposition \ref{prop:deffib}, which shows that any small deformation of such a Calabi-Yau threefold is induced by a deformation of the generalized functional invariant map of the K3 fibration on it. In particular, this allows us to relate the moduli spaces of such Calabi-Yau threefolds to Hurwitz spaces describing ramified covers between curves, and gives an easy way to study their degenerations.

\section{Construction} \label{constructionsect}

We begin by setting up some notation. Let $\calX$ be a smooth projective threefold that admits a fibration $\pi\colon \calX \to B$ by K3 surfaces over a smooth base curve $B$. Let $\NS(X_p)$ denote the N\'{e}ron-Severi group of the fibre of $\calX$ over a general point $p \in B$. In what follows, we will assume that $\NS(X_p) \cong M_2$, where $M_2$ denotes the rank $19$ lattice $M_2 := H \oplus E_8 \oplus E_8 \oplus \langle -4 \rangle$.

Denote the open set over which the fibres of $\calX$ are smooth K3 surfaces by $U \subset B$ and let $\pi_U\colon \calX_U \to U$ denote the restriction of $\calX$ to $U$. We suppose further that $\calX_U \to U$ is an $M_2$-polarized family of K3 surfaces, in the sense of the following definition.

\begin{defn}\label{defn:L-pol} \textup{\cite[Definition 2.1]{flpk3sm}}
Let $L$ be a lattice and $\pi_U\colon \mathcal{X}_U \rightarrow U$ be a smooth projective family of K3 surfaces over a smooth quasiprojective base $U$. We say that $\mathcal{X}_U$ is an \emph{$L$-polarized family of K3 surfaces} if there is a trivial local subsystem $\mathcal{L}$ of $R^2 \pi_*\mathbb{Z}$ so that, for each $p \in U$, the fibre $\mathcal{L}_p \subseteq H^2({X}_p,\mathbb{Z})$ of $\calL$ over $p$ is a primitive sublattice of $\NS(X_p)$ that is isomorphic to $L$ and contains an ample divisor class.
\end{defn}

To any such family, we can associate a \emph{generalized functional invariant map} $g\colon U \to \calM_{M_2}$, where $\calM_{M_2}$ denotes the (compact) moduli space of $M_2$-polarized K3 surfaces. $g$ is defined to be the map which takes a point $p \in U$ to the point in moduli corresponding to the fibre $X_p$ of $\calX$ over $p$.

\cite[Theorem 5.10]{flpk3sm} gives five examples of Calabi-Yau threefolds admitting such fibrations, arising from the Doran-Morgan classification \cite[Table 1]{doranmorgan}. In each of these cases, the family $\pi_U\colon \calX_U \to U$ is the pull-back of a special family of K3 surfaces $\calX_2 \to \calM_{M_2}$, by the generalized functional invariant map. 

\begin{remark} In addition to the five examples from \cite[Theorem 5.10]{flpk3sm}, the authors are aware of many more Calabi-Yau threefolds which admit such fibrations. Indeed,  the toric geometry functionality of the computer software \emph{Sage} may be used to perform a search for such fibrations on complete intersection Calabi-Yau threefolds in toric varieties that have small Hodge number $h^{2,1}$, yielding dozens of additional examples; details will appear in future work.
\end{remark}

Our first result will show that this is not a coincidence: in fact, any $M_2$-polarized family of K3 surfaces $\pi_U\colon \calX_U \to U$ is determined up to isomorphism by its generalized functional invariant, so we can obtain any such family by pulling back the special family $\calX_2$. We will therefore begin our study of threefolds fibred by $M_2$-polarized K3 surfaces by studying the family $\calX_2$.

\begin{theorem} \label{gfithm} Let $\calX_U \to U$ denote a non-isotrivial $M_2$-polarized family of K3 surfaces over a quasi-projective curve $U$, such that the N\'{e}ron-Severi group of a general fibre of $\calX_U$ is isometric to $M_2$. Then $\calX_U$ is uniquely determined \textup{(}up to isomorphism\textup{)} by its generalized functional invariant map $g\colon U \to \calM_{M_2}$. 
\end{theorem}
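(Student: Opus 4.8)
The plan is to deduce the theorem from the Torelli theorem for $M_2$-polarized K3 surfaces together with a rigidity statement: a very general $M_2$-polarized K3 surface has no nontrivial polarization-preserving automorphisms. This rigidity is precisely the property that fails for $M_1$, and it is the reason the generalized functional invariant alone suffices here. Concretely, it is enough to show that any two non-isotrivial $M_2$-polarized families $\calX_U\to U$ and $\calX_U'\to U$ inducing the same generalized functional invariant $g$ are isomorphic as families; applying this with $\calX_U'$ the pull-back $g^{*}\calX_2$ of the special family then yields the stated description of $\calX_U$ in terms of $g$ alone.

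First I would record the lattice-theoretic and Hodge-theoretic setup. Writing $\Lambda_{K3}$ for the K3 lattice and $T=M_2^{\perp}$ for the orthogonal complement, one has $T\cong H\oplus\langle 4\rangle$, of signature $(2,1)$, whose period domain $D_T$ is one-dimensional, and $\calM_{M_2}$ is the arithmetic quotient $\Orth^{+}(T)\backslash D_T$ by \cite[Theorem 7.1]{mslpk3s}. Because $\calL$ is a trivial local subsystem, each family carries a canonical marking $\calL_p\cong M_2$, and equality of the two functional invariants means that for every $p\in U$ the fibres $X_p$ and $X_p'$ have the same period point $g(p)\in\calM_{M_2}$, compatibly with these markings. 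The Torelli theorem for lattice-polarized K3 surfaces then produces, for each $p$, a marking-preserving Hodge isometry and hence an isomorphism $X_p\xrightarrow{\sim}X_p'$ carrying the $M_2$-polarization on one fibre to that on the other; this holds at every point of $U$, including the special smooth fibres.

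The crux is uniqueness of these fibrewise isomorphisms. I would show that the generic fibre admits no nontrivial automorphism acting trivially on $\calL_p\cong M_2$. Since the polarization-preserving automorphism scheme is finite and unramified and its order can only jump upward on a proper closed subset, it suffices to check this on a very general member, where $\NS(X_p)=M_2$ exactly. By the strong Torelli theorem such an automorphism is determined by a Hodge isometry of $H^2$ fixing $M_2$ pointwise; for a very general period the Hodge isometries of $T$ are only $\pm\mathrm{id}$, so the isometry is either the identity or $\mathrm{id}_{M_2}\oplus(-\mathrm{id}_T)$. The latter extends over the overlattice $\Lambda_{K3}\supseteq M_2\oplus T$ if and only if $-\mathrm{id}$ acts trivially on the discriminant group $A_T\cong\Z/4$; but it does not, so $\mathrm{id}_{M_2}\oplus(-\mathrm{id}_T)$ is not realized geometrically, and the generic polarization-preserving automorphism group is trivial. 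This is exactly where $M_2$ differs from $M_1$: there $A_T\cong\Z/2$, $-\mathrm{id}$ acts trivially, and the corresponding antisymplectic involution does exist.

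Finally I would promote the now-unique fibrewise isomorphisms to a global one via the relative polarized isomorphism scheme $\operatorname{Isom}_U(\calX_U,\calX_U')$. It is finite over $U$ (polarized K3 surfaces have finite automorphism groups), surjective by the Torelli step above, and of degree one over a dense open $V\subseteq U$ by the rigidity just established. As $U$ is a normal curve, the reduced irreducible component of $\operatorname{Isom}$ dominating $U$ is finite and birational over $U$, hence maps isomorphically onto $U$; the resulting section is a polarization-preserving isomorphism $\calX_U\xrightarrow{\sim}\calX_U'$ extending the fibrewise maps across the finitely many special smooth fibres in $U\setminus V$. The main obstacle is the rigidity step — in particular the discriminant-form computation ruling out $\mathrm{id}_{M_2}\oplus(-\mathrm{id}_T)$, which forces uniqueness and singles out the $M_2$ case; the gluing and the extension across $U\setminus V$ are then comparatively formal.
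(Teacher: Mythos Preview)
Your proposal is correct and rests on the same key insight as the paper: the only candidate for a nontrivial polarization-preserving automorphism of a general fibre is the involution acting as $\mathrm{id}_{M_2}\oplus(-\mathrm{id}_T)$, and this is ruled out because $-\mathrm{id}$ does not act trivially on the discriminant group $A_T\cong\Z/4\Z$. The organization differs in two places. First, where you argue directly that the Hodge isometry group of $T$ at a very general period is $\{\pm\mathrm{id}\}$, the paper instead invokes Nikulin's classification of non-symplectic automorphisms: it shows $\psi^*$ acts irreducibly on $M_2^\perp$, applies \cite[Theorem 3.1]{fagkk3s} to get $\varphi(n)\mid 3$, and then uses Vaidya's lower bound for $\varphi$ to force $n\in\{1,2\}$ before reaching the discriminant check. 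Your route is shorter and avoids this detour. Second, the paper packages the globalization as a gluing/contradiction argument over a simply connected open cover rather than via the relative $\operatorname{Isom}$ scheme; these are equivalent, and your use of normality of $U$ to extend the section across the special smooth fibres is a clean way to handle what the paper leaves implicit.
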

\begin{proof} Suppose for a contradiction that $\calX_U$ and $\calY_U$ are two non-isomorphic $M_2$-polarized familes of K3 surfaces over $U$, that satisfy the conditions of the theorem and have the same generalized functional invariant $g\colon U \to \calM_{M_2}$.

Let $\{U_i\}$ denote a cover of $U$ by simply connected open subsets and let $\calX_{U_i}$ (resp. $\calY_{U_i}$) denote the restriction of $\calX_U$ (resp. $\calY_U$) to $U_i$ for each $i$. On each $U_i$, Ehresmann's Theorem (see, for example, \cite[Section 9.1.1]{htcagI}) shows that we can choose markings compatible with the $M_2$-polarizations on the families of K3 surfaces $\calX_{U_i}$ and $\calY_{U_i}$. Thus, by the Global Torelli Theorem \cite[Theorem 2.7']{fagkk3s}, the families $\calX_{U_i}$ and $\calY_{U_i}$ are isomorphic for each $i$.

Therefore, since we have assumed that $\calX_U$ and $\calY_U$ are non-isomorphic, they must differ in how the families $\calX_{U_i}$ and $\calY_{U_i}$ glue together over the intersections $U_i \cap U_j$. Let $V \subset U_i \cap U_j$ be a connected component of such an intersection, such that the gluing maps differ over $V$. As $\calX_U$ and $\calY_U$ are isomorphic over $V$, the gluing maps over $V$ must differ by composition with a nontrivial fibrewise automorphism $\psi$. Moreover, by the polarization condition, $\psi$ must preserve the $M_2$-polarizations on the fibres over $V$.

Now consider the action of $\psi$ on the fibre $X_p$ of $\calX_U$ over a point $p \in V$. Since $\calX_U$ is not isotrivial, we may choose $p$ so that the N\'{e}ron-Severi lattice of $X_p$ is isometric to $M_2$ and so, as $\calX$ is an $M_2$-polarized family of K3 surfaces, $\psi^*\in \Orth (H^2(X_p,\Z))$ fixes $\Pic(X_p) \cong M_2$.  Now, according to \cite[Section 3.3]{fagkk3s}, there is an exact sequence
\[0 \longrightarrow H_{X_p} \longrightarrow \mathrm{Aut}(X_p) \longrightarrow \Orth(M_2),\]
where $H_{X_p}$ is a finite cyclic group of automorphisms of $X_p$ which act nontrivially on $H^{2,0}(X_p)$. Since $\psi \in \mathrm{Aut}(X_p)$ maps to the trivial element of $\Orth(M_2)$, we thus see that $\psi$ must act non-symplectically on $X_p$.

Thus, by \cite[Proposition 1.6.1]{isbfa}, $\psi^*$ descends to an element of the subgroup $\Orth(M_2^\perp)^*$ of  $\Orth(M_2^\perp)$ which induces the trivial automorphism on the discriminant group $A_{M_2^{\perp}}$. Furthermore, since $X_p$ is general, $M_2^\perp$ supports an irreducible rational Hodge structure, so $\psi^*$ must act irreducibly on $M_2^\perp$. Therefore, by \cite[Theorem 3.1]{fagkk3s}, it follows that the order $n$ of $\psi^*$ must satisfy $\varphi(n) | \rank(M_2^\perp) = 3$, where $\varphi(n)$ denotes Euler's totient function. Using Vaidya's \cite{ietf} lower bound  for $\varphi(n)$,
\[\varphi(n) \geq \sqrt{n}\  \mathrm{for}\  n > 2,\  n \neq 6\]
we see that $\varphi(n) |3$ implies that $n \leq 9$. A simple check then shows that $n = 2$ or $n=1$. If $n=2$ then, by irreducibility, $\psi^*$ would have to act as $-\mathrm{Id}$ on $M_2^{\perp}$ and as the identity on $M_2$. However, since the discriminant group of $M_2^\perp$ is $A_{M_2^{\perp}} \cong \mathbb{Z}/4\mathbb{Z}$, such $\psi^*$ would not descend to the identity on  $A_{M_2^{\perp}}$, so this case cannot occur. Therefore, $\psi^*$ must be of order $1$. But then $\psi$ must be the trivial automorphism, which is a contradiction.
\end{proof}

\subsection{A fundamental family}\label{SIsect}

The family $\calX_2 \to \calM_{M_2}$ is described in \cite[Section 5.4.1]{flpk3sm}. It is given as the minimal resolution of the family of hypersurfaces in $\Proj^3$ obtained by varying $\lambda$ in the following expression
\begin{equation} \label{x2eq} \lambda w^4 + xyz(x+y+z-w) = 0.\end{equation}
This family has been studied extensively by Narumiya and Shiga \cite{mmfk3sis3drp}, we will make substantial use of their results in the sequel (note, however, that our $\lambda$ is not the same as the $\lambda$ used in \cite{mmfk3sis3drp}, instead, our $\lambda$ is equal to $\mu^4$ or $\frac{u}{256}$ from \cite{mmfk3sis3drp}).

Recall from \cite[Theorem 7.1]{mslpk3s} that $\calM_{M_2}$ is the compactification of the modular curve $\Gamma_0(2)^+ \setminus \mathbb{H}$. In \cite[Section 5.4.1]{flpk3sm} it is shown that the orbifold points of orders $(2,4,\infty)$ in $\calM_{M_2}$ occur at $\lambda = (\frac{1}{256},\infty,0)$ respectively, and the K3 fibres of $\calX_2$ are smooth away from these three points. Let $U_{M_2}$ denote the open set obtained from $\calM_{M_2}$ by removing these three points. Then the restriction of $\calX_2$ to $U_{M_2}$ is an $M_2$-polarized family of K3 surfaces (in the sense of Definition \ref{defn:L-pol}).

As noted in the previous section, it follows from Theorem \ref{gfithm} that any $M_2$-polarized family of K3 surfaces $\calX_U \to U$ can be realized as the pull-back of $\calX_2$ by the generalized functional invariant map $g\colon U \to \calM_{M_2}$.

\subsection{Constructing Calabi-Yau threefolds}\label{SIconstructionsect}

In the remainder of this paper, we will use this theory to construct Calabi-Yau threefolds fibred by $M_2$-polarized K3 surfaces and study their properties. We note that, in this paper, a \emph{Calabi-Yau threefold} will always be a smooth projective threefold $\calX$ with $\omega_{\calX} \cong \calO_{\calX}$ and $H^1(\calX,\calO_{\calX})=0$. We further note that the cohomological condition in this definition implies that any fibration of a Calabi-Yau threefold by K3 surfaces must have base curve $\Proj^1$, so from this point we restrict our attention to the case where $B \cong \Proj^1$.

Recall that, by \cite[Theorem 5.10]{flpk3sm}, we already know of several Calabi-Yau threefolds with $h^{2,1} = 1$ that admit fibrations by $M_2$-polarized K3 surfaces. It is noted in \cite[Section 5.4]{flpk3sm} that the generalized functional invariant maps determining these fibrations all have a common form, given by a pair of integers $(i,j)$: the map $g$ is an $(i+j)$-fold cover ramified at two points of orders $i$ and $j$ over $\lambda = \infty$, once to order $(i+j)$ over $\lambda = 0$, and once to order $2$ over a point that depends upon the modular parameter of the threefold, where $i,j \in \{1,2,4\}$ are given in \cite[Table 1]{flpk3sm}.

The aim of this section is to extend this construction of Calabi-Yau threefolds to a more general setting. Let $g\colon \Proj^1 \to \calM_{M_2}$ be an $n$-fold cover and let $[x_1,\ldots,x_k]$, $[y_1,\ldots,y_l]$ and $[z_1,\ldots,z_m]$ be partitions of $n$ encoding the ramification profiles of $g$ over $\lambda = 0$, $\lambda = \infty$ and $\lambda = \frac{1}{256}$ respectively. Let $r$ denote the degree of ramification of $g$ away from $\lambda \in \{0,\frac{1}{256},\infty\}$, defined to be
\[r := \sum_{\mathclap{\substack{p \in \Proj^1 \\ g(p) \notin  \{0,\frac{1}{256},\infty\}}}} (e_p -1),\]
where $e_p$ denotes the ramification index of $g$ at the point $p \in \Proj^1$.

Let $\pi_2 \colon \bar{\calX}_2 \to \calM_{M_2}$ denote the threefold fibred by (singular) K3 surfaces defined by Equation \eqref{x2eq}; then $\bar{\calX}_2$ is birational to $\calX_2$. Let $\bar{\pi}_g\colon \bar{\calX}_g \to \Proj^1$ denote the normalization of the pull-back $g^*\bar{\calX}_2$. 

\begin{proposition} \label{trivialcanonicalprop} The threefold $\bar{\calX}_g$ has trivial canonical sheaf if and only if  $k+l+m-n-r=2$ and either $l = 2$ with $y_1,y_2 \in \{1,2,4\}$, or $l = 1$ with $y_1 = 8$.
\end{proposition}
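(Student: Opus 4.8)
The plan is to compute the dualizing sheaf of $\bar{\calX}_g$ by combining an adjunction computation on the total space with a local study of the normalization map over each of the three special points of $\calM_{M_2}$. First I would realise $g^*\bar{\calX}_2$ explicitly: writing $g = [g_0 : g_1]$ with $g_0,g_1$ homogeneous of degree $n$ on $\Proj^1$, the pull-back is the bidegree $(4,n)$ hypersurface
\[ Y := \{\, g_1(t)\, w^4 + g_0(t)\, xyz(x+y+z-w) = 0 \,\} \subset \Proj^3 \times \Proj^1, \]
whose zero loci of $g_1$, $g_0$ and $g_1-\tfrac{1}{256}g_0$ carry the prescribed partitions $[x_i]$, $[y_j]$, $[z_l]$. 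Since $Y$ is a hypersurface it is Gorenstein, so adjunction with $K_{\Proj^3\times\Proj^1} = \calO(-4,-2)$ gives $\omega_Y \cong \calO_Y(0,n-2) \cong \bar{\pi}^*\calO_{\Proj^1}(n-2)$. The point is that the normalization $\nu\colon \bar{\calX}_g \to Y$ alters the dualizing sheaf only over the non-normal locus of $Y$, so the strategy is to show $\omega_{\bar{\calX}_g} = \bar{\pi}_g^*\calO_{\Proj^1}(N)$ and to compute the correction $N - (n-2)$ as a sum of purely local contributions, one for each special fibre. It is convenient to track these contributions concretely by representing global $3$-forms as Poincar\'e residues $\operatorname{Res}\frac{A(t)\,\Omega_{\Proj^3}\wedge dt}{F}$ with $A$ a section of $\calO_{\Proj^1}(n-2)$, so that triviality of $\omega_{\bar{\calX}_g}$ becomes the existence of an $A$ whose associated residue form pulls back to a nowhere-vanishing section on $\bar{\calX}_g$.

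Next I would dispose of the two ``tame'' fibres. Over $\lambda = 0$ the fibre is the reduced tetrahedron $\{xyz(x+y+z-w)=0\}$ and over $\lambda = \tfrac{1}{256}$ it is a (reduced) nodal K3; in suitable local coordinates $Y$ acquires along the double curves only compound Du Val singularities, of type $cA_{x_i-1}$ and $cA_{z_l-1}$ respectively. These are normal, Gorenstein and canonical, so $\nu$ is an isomorphism there and they contribute nothing to the correction; the only constraint they impose is that the residue form not vanish along them, i.e.\ that $A$ be non-zero at each point lying over $\lambda \in \{0,\tfrac{1}{256}\}$. Thus all of the zeros of $A$ must be absorbed over $\lambda = \infty$, and after this reduction the global degree bookkeeping is exactly Riemann--Hurwitz for $g$, which reproduces $k+l+m-n-r=2$ as the statement that the ``horizontal'' part of the canonical class has degree zero.

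The heart of the proof, and the step I expect to be the main obstacle, is the local analysis over $\lambda = \infty$, where the fibre $\{w^4=0\}$ is non-reduced and $Y$ is genuinely non-normal. Here the local model is $\{w^4 + t^{y_j} P = 0\}$ with $P = xyz(x+y+z-w)$, and the normalization is governed by $d := \gcd(4,y_j)$: writing $4 = d\alpha$ and $y_j = d\beta$ with $\gcd(\alpha,\beta)=1$, the transverse curve $w^{\alpha} = (\text{unit})\,t^{\beta}$ is normalized by $t = \tau^{\alpha}$, $w \sim \tau^{\beta}$, so the reduced $\infty$-fibre component appears with multiplicity $\alpha$. Pulling the residue form through this normalization I would compute its order along that component to be
\[ \alpha\,\operatorname{ord}_{t_j}(A) + \alpha - 1 - 3\beta, \]
where the exponent $3$ records the vanishing of $\partial_w(w^4)=4w^3$. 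Setting this to zero forces $\operatorname{ord}_{t_j}(A) = \tfrac{1+3\beta}{\alpha} - 1$ and, in particular, the integrality condition $\alpha \mid (1+3\beta)$, which already rules out $y_j = 3,7$.

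Finally I would assemble the local data. Summing the forced vanishing orders against $\deg A = n-2$ yields a numerical identity equivalent to $\tfrac14\sum_j (y_j - d_j) = 2 - l$ (with $d_j = \gcd(4,y_j)$), whose non-negative solutions single out $l \le 2$, giving $y_j \in \{1,2,4\}$ when $l=2$. The delicate point, where the finest part of the normalization analysis is needed, is the case $l=1$: the numerical identity alone still admits several totally ramified profiles of the correct degree, and one must check directly which of these actually produce a Gorenstein threefold on which the extended holomorphic $3$-form is nowhere vanishing. Carrying out the normalization of the models $\{w^4 = t^{y_1}P\}$ along the edges $\{P=0\}$ and at the vertices of the configuration, and verifying that the pulled-back form genuinely trivializes $\omega_{\bar{\calX}_g}$, is what isolates $y_1 = 8$ as the unique admissible value. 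I expect this edge-and-vertex normalization computation — rather than the generic-fibre or degree bookkeeping — to be the crux of the argument.
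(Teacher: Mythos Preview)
Your approach differs substantially from the paper's. The paper never works with the hypersurface $Y=g^*\bar{\calX}_2$ or with Poincar\'e residues; instead it first computes $\omega_{\bar{\calX}_2}\cong\pi_2^*\calO_{\calM_{M_2}}(-1)$ by a single adjunction, and then analyses the finite map $\bar{\calX}_g\to\bar{\calX}_2$. The only local input required is that over an $\infty$-preimage with ramification $y_j$ the normalization $\bar{\calX}_g\to g^*\bar{\calX}_2$ is an $\hcf(y_j,4)$-fold cover of the fibre; a Hurwitz-type step then gives
\[
\omega_{\bar{\calX}_g}\;\cong\;\bar\pi_g^*\calO_{\Proj^1}\!\Bigl(n+r-k-m+\textstyle\sum_{j}\bigl(\tfrac{y_j}{\hcf(y_j,4)}-1\bigr)\Bigr),
\]
and combining with Riemann--Hurwitz for $g$ collapses this to the single equation $\sum_j y_j/\hcf(y_j,4)=2$. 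That equation already forces $l\le 2$, and for $l=1$ it reads $y_1=2\,\hcf(y_1,4)$, whose only solution is $y_1=8$. No edge-or-vertex analysis is ever needed.

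By contrast, your residue computation on the normalization of $Y$ yields the weaker identity $\tfrac14\sum_j\bigl(y_j-\hcf(y_j,4)\bigr)=2-l$, which for $l=1$ still admits $y_1\in\{5,6,8\}$ (all three satisfy your integrality condition $\alpha\mid 1+3\beta$). You then defer the elimination of $y_1=5,6$ to a finer study ``along the edges $\{P=0\}$ and at the vertices''. This is where the real gap lies. Your generic-fibre calculation, carried out where $P$ is a unit, already produces for $y_1=5$ a candidate section of $\omega_{\bar{\calX}_g}$ that is nowhere vanishing along the main component $\{s=0\}$ of $\bar\pi_g^{-1}(t_1)$; so if $y_1=5$ is truly excluded, the obstruction must come from \emph{additional} divisorial components of the normalized $\infty$-fibre supported over $\{P=0\}$, and from the order of your form along those. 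You have not shown that such components appear, nor computed the relevant orders, and it is not evident from your sketch how this step would go. The paper's route sidesteps the difficulty entirely: because $\bar{\calX}_2$ is already normal (indeed generically smooth) along its $\infty$-fibre, the ramification of $\bar{\calX}_g\to\bar{\calX}_2$ is uniform across the whole fibre and no stratification by the zero locus of $P$ is required.
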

\begin{proof} We begin by noting that a simple adjunction calculation shows that $\bar{\calX}_2$ has canonical sheaf $\omega_{\bar{\calX}_2} \cong \pi_2^*\calO_{\calM_{M_2}}(-1)$. We need to study the effects of the map $\bar{\calX}_g \to \bar{\calX}_2$ on this canonical sheaf.

It is an easy local computation using Equation \eqref{x2eq} to show that the pull-back $g^*\bar{\calX}_2$ is normal away from the fibres over $g^{-1}(\infty)$. To see what happens on the remaining fibres, suppose that $p \in \Proj^1$ is a point with $g(p)= \infty$ and let $y_i$ denote the order of ramification of $g$ at $p$. Then the fibre over $p$ is contained in the non-normal locus of $g^*\bar{\calX}_2$ if and only if $y_i > 1$. Away from the fibre over $p$ the normalization map $\bar{\calX_g} \to g^*\bar{\calX}_2$ is an isomorphism, whilst on the fibre over $p$ it is an $\hcf(y_i,4)$-fold cover.

With this in place, we perform two adjunction calculations. The first is for the map of base curves $g\colon \Proj^1 \to \calM_{M_2}$. As $ \calM_{M_2} \cong \Proj^1$, we find that we must have
\begin{equation}\label{basecovereq} k + l + m - n - r - 2 = 0.\end{equation}
Next, we compute $\omega_{\bar{\calX}_g}$. We find:
\[\omega_{\bar{\calX}_g} \cong \bar{\pi}_g^*\calO_{\Proj^1}\left(n+r - k - m + \sum_{i=1}^l \left(\frac{y_i}{\hcf(y_i,4)} - 1\right)\right).\]
Putting these equations together, we see that the condition that $\omega_{\bar{\calX}_g}$ is trivial is equivalent to 
\[ l - 2 + \sum_{i=1}^l \left(\frac{y_i}{\hcf(y_i,4)} - 1\right) = 0.\]
Since $l \geq 1$ and  $(\frac{y_i}{\hcf(y_i,4)} - 1)$ is nonnegative for any integer $y_i > 0$, we must therefore have either $l = 2$ and $y_i = \hcf(y_i,4)$, in which case $y_i|4$, or $l = 1$ and $y_1 = 2\,\hcf(y_1,4)$, in which case $y_1 = 8$. Together with Equation \eqref{basecovereq}, this proves the proposition.
\end{proof}

Next we will show that we can resolve most of the singularities of $\bar{\calX}_g$.

\begin{proposition} \label{KXgtrivial} If Proposition \ref{trivialcanonicalprop} holds, then there exists a projective birational morphism $\calX_g \to \bar{\calX}_g$, where ${\calX}_g$ is a normal threefold with trivial canonical sheaf and at worst $\Q$-factorial terminal singularities. Furthermore, any singularities of $\calX_g$ occur in its fibres over $g^{-1}(\frac{1}{256})$, and ${\calX}_g$ is smooth if $g$ is unramified over $\lambda = \frac{1}{256}$ \textup{(}which happens if and only if $m = n$\textup{)}. 
\end{proposition}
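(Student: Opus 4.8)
The plan is to locate and classify the singularities of $\bar{\calX}_g$ and then resolve them crepantly wherever possible, keeping careful track of the effect of each modification on the canonical sheaf. The three special values $\lambda = 0$, $\lambda = \frac{1}{256}$ and $\lambda = \infty$ behave quite differently, so I would treat the fibres over $g^{-1}(0)$, $g^{-1}(\frac{1}{256})$ and $g^{-1}(\infty)$ separately. As a first step I would show that $\bar{\calX}_g$ is already smooth away from these fibres: the family $\calX_2$ restricts to a smooth family over $U_{M_2}$, and since the pull-back of a smooth morphism along $s \mapsto s^{e}$ is again smooth, the ramification measured by $r$ (which by definition occurs only over points with $g(p) \notin \{0,\frac{1}{256},\infty\}$) introduces no singularities. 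Thus all singularities of $\bar{\calX}_g$ lie in the fibres over $g^{-1}(\{0,\frac{1}{256},\infty\})$.

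Next I would carry out the local analysis over $\lambda = \frac{1}{256}$, which is the source of the terminal singularities in the statement. Here the general member of $\calX_2$ acquires a single node, and since a one-parameter smoothing of a node has smooth total space, the space $\bar{\calX}_g$ near such a point is locally of the form $\{xy = z^2 + s^{z_j}\}$, where $z_j$ is the relevant ramification index of $g$ over $\frac{1}{256}$ (the normalization is an isomorphism here, since by the computation in the proof of Proposition \ref{trivialcanonicalprop} the non-normal locus sits over $g^{-1}(\infty)$). For $z_j = 1$ this is smooth, while for $z_j \geq 2$ it is an isolated compound Du Val singularity of type $cA_1$, hence terminal and Gorenstein. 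In particular, when $m = n$ the partition $[z_1,\ldots,z_m]$ consists entirely of ones, so $g$ is unramified over $\frac{1}{256}$ and these fibres are already smooth, which is the final assertion of the proposition.

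It then remains to deal with the fibres over $g^{-1}(0)$ and $g^{-1}(\infty)$ and to assemble the global morphism. Using Equation \eqref{x2eq} directly — at $\lambda = 0$ the fibre degenerates to a union of four planes, and at $\lambda = \infty$ to the quadruple plane $w^4 = 0$ — I would write down explicit local (toric) models for $\bar{\calX}_g$, incorporating the ramification partitions $[x_i]$ and $[y_i]$ together with the $\hcf(y_i,4)$-fold covering structure of the normalization along $g^{-1}(\infty)$, and show that these are canonical Gorenstein singularities admitting crepant resolutions to smooth points; the arithmetic conditions of Proposition \ref{trivialcanonicalprop} ($y_i \mid 4$, or $y_1 = 8$ when $l=1$) are exactly what make such a resolution crepant. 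Performing this crepant resolution over $g^{-1}(0) \cup g^{-1}(\infty)$ yields a threefold that is smooth along these fibres and still has trivial canonical sheaf. Finally, over $g^{-1}(\frac{1}{256})$ the remaining $cA_1$ points are terminal Gorenstein but need not be $\Q$-factorial, so I would pass to a small (hence crepant) projective $\Q$-factorialization, whose existence follows from the relative minimal model program. The composite $\calX_g \to \bar{\calX}_g$ is then projective and birational, $\calX_g$ is normal and $\Q$-factorial with trivial canonical sheaf, its only singularities are the terminal $cA_1$ points lying over $g^{-1}(\frac{1}{256})$, and it is smooth precisely when $m = n$.

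The hard part will be the explicit resolution over $g^{-1}(0)$ and $g^{-1}(\infty)$: the degenerate (and, at $\infty$, non-reduced) fibres force the local models to be genuinely three-dimensional, and the interplay between the ramification indices, the non-normal locus, and the $\hcf(y_i,4)$-fold covers makes the verification that a \emph{crepant} resolution to a smooth space exists the real crux of the argument. By contrast, the behaviour over $\frac{1}{256}$ and the final $\Q$-factorialization are comparatively formal once the $cA_1$ description is established.
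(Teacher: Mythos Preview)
Your overall strategy matches the paper's: analyse the singularities of $\bar{\calX}_g$ fibre by fibre over the three special values, resolve crepantly where possible, and $\Q$-factorialize at the end. However, there is a genuine gap in your first step.

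You assert that $\bar{\calX}_g$ is smooth away from the fibres over $g^{-1}(\{0,\frac{1}{256},\infty\})$, arguing that ``the family $\calX_2$ restricts to a smooth family over $U_{M_2}$''. But $\bar{\calX}_g$ is defined as the normalization of $g^*\bar{\calX}_2$, where $\bar{\calX}_2$ is the \emph{singular} hypersurface \eqref{x2eq}, not its resolution $\calX_2$. The hypersurface $\bar{\calX}_2$ carries six curves $C_1,\ldots,C_6$ of $cA_3$ singularities which are \emph{sections} of $\pi_2$; these lift to six horizontal curves of $cA_3$ singularities on $\bar{\calX}_g$, present over every point of $\Proj^1$. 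You have missed these entirely.

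This omission is not cosmetic. Over $\lambda = \infty$ the fibre of $\bar{\calX}_2$ contains four further curves $E_1,\ldots,E_4$ of $cA_3$ singularities, and the six sections $C_i$ meet them pairwise at six triple points. The most delicate part of the paper's argument is the explicit toric resolution of a neighbourhood of such a point $C_i \cap E_j \cap E_k$ (with local model $\{w^4 + t^2xy = 0\}$ when $y_i = 2$), and verifying it is crepant and projective. Your plan to ``write down explicit local (toric) models'' over $g^{-1}(\infty)$ would have to confront this interaction, which your description of the fibre as simply ``the quadruple plane $w^4 = 0$'' does not anticipate. Two smaller points: your local model $\{xy = z^2 + s^{z_j}\}$ over $\frac{1}{256}$ is correct but is a $cA_{z_j-1}$ singularity, not $cA_1$ in general; and the arithmetic constraints of Proposition~\ref{trivialcanonicalprop} guarantee that $\omega_{\bar{\calX}_g}$ is trivial, rather than being ``exactly what make such a resolution crepant'' --- the crepancy of the resolutions over $0$ and $\infty$ is established case by case and holds for each admissible $y_i$.
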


\begin{remark} There exist examples of maps $g\colon \Proj^1 \to \calM_{M_2}$, satisfying the conditions of this proposition and ramified over $\lambda = \frac{1}{256}$, for which the corresponding threefolds $\calX_g$ are not smooth; see Example \ref{moduliex}.
\end{remark}

\begin{proof} We prove this proposition by showing that the singularities of $\bar{\calX}_g$ may all be crepantly resolved, with the possible exception of some $\Q$-factorial terminal singularities lying in fibres over $g^{-1}(\frac{1}{256})$. 

The threefold $\bar{\calX}_2$ has six smooth curves $C_1,\ldots,C_6$ of $cA_3$ singularities which form sections of the fibration $\pi_2$. These lift to the cover  $\bar{\calX}_g$ so that, away from the fibres over $g^{-1}\{0,\frac{1}{256},\infty\}$, the threefold $\bar{\calX}_g$ also has six smooth curves of $cA_3$ singularities which form sections of the fibration. These can be crepantly resolved, so $\calX_g$ is smooth away from its fibres over $g^{-1}\{0,\frac{1}{256},\infty\}$.

Now let $\Delta_0$ be a disc in $\calM_{M_2}$ around $\lambda = 0$ and let $\Delta'_0$ be one of the connected components of its preimage under $g$. Then $g\colon \Delta'_0 \to \Delta_0$ is an $x_i$-fold cover ramified totally over $\lambda = 0$, for some $x_i$. The threefold $\bar{\calX_2}$ is smooth away from the curves $C_i$ over $\Delta_0$ and the fibre of $\pi_2\colon\bar{\calX_2} \to \calM_{M_2}$ over $\lambda = 0$ has four components meeting transversely along six curves $D_1,\ldots,D_6$, with dual graph a tetrahedron.

After pulling back to $\Delta'_0$ we see that, after resolving the pull-backs of the six curves $C_i$ of $cA_3$ singularities, the threefold $\bar{\calX}_g$ is left with six curves of $cA_{x_i-1}$ singularities in its fibre over $g^{-1}(0)$, given by the pull-backs of the curves $D_j$. Friedman \cite[Section 1]{bgd7} has shown how to crepantly resolve such a configuration using toric geometry, so $\calX_g$ is smooth over $\Delta'_0$.

Next let $\Delta_{\infty}$ be a disc in $\calM_{M_2}$ around $\lambda = \infty$ and let $\Delta'_{\infty}$ be one of the connected components of its preimage under $g$. Then $g\colon \Delta'_{\infty} \to \Delta_{\infty}$ is a $y_i$-fold cover ramified totally over $\lambda = \infty$, for some $y_i \in \{1,2,4,8\}$. 

The family $\pi_2\colon\bar{\calX}_2 \to \calM_{M_2}$ has a fibre of multiplicity four over $\lambda = \infty$ and, in addition to the six curves $C_i$ of $cA_3$ singularities forming sections of the fibration $\pi_2$, the threefold $\bar{\calX}_2$ also has four curves $E_1,\ldots,E_4$ of $cA_3$ singularities in its fibre over $\lambda = \infty$. The curves $E_j$ intersect in pairs at six points, which coincide with the six points of intersection of the curves $C_i$ with the fibre $\pi_2^{-1}(\infty)$. Thus, over $\Delta_{\infty}$ the threefold $\bar{\calX}_2$ has ten curves of $cA_3$ singularities, which meet in six triple points.

If $y_i = 1$, then $\bar{\calX}_g$ is isomorphic to $\bar{\calX}_2$ over $\Delta_{\infty}$. The ten curves of $cA_3$ singularities may be crepantly resolved by the toric method in \cite[Section 1]{bgd7}, so $\calX_g$ is smooth over $\Delta'_{\infty}$. This resolution gives three exceptional components over each curve of $cA_3$ singularities and three exceptional components over each point where three of these curves meet. The resulting singular fibre over $g^{-1}(\infty)$ has $31$ components ($3$ from each of the curves $E_j$, three from each of the six intersections between these curves, and the strict transform of the original component).

When $y_i = 2$, the four curves $E_j$ lift to four curves of $cA_1$ singularities in $\bar{\calX_g}$. The threefold $\bar{\calX_g}$ thus contains ten curves of singularities over $\Delta'_{\infty}$: six curves of $cA_3$'s coming from the pull-backs of the $C_i$ and four curves of $cA_1$'s coming from the pull-backs of the $E_j$. Away from the six points $C_i \cap E_j \cap E_k$, these ten curves of singularities are smooth and may be crepantly resolved. It therefore suffices to compute resolutions locally around these six points.

We compute such resolutions using a modification of the toric method from \cite[Section 1]{bgd7}. Locally in a neighbourhood of a point $C_i \cap E_j \cap E_k$, the singularity of $\bar{\calX_g}$ looks like $\{w^4 + t^2xy = 0\} \subset \C^4$. This corresponds to an affine toric variety with cone generated by the rays $(1,0,0)$, $(0,1,0)$, $(-1,-2,4)$. Its dual cone is generated by the rays $(0,0,1)$, $(4,0,1)$, $(0,2,1)$, and can be resolved by adding in additional rays generated by $(1,0,1)$, $(3,0,1)$, $(0,1,1)$, $(2,1,1)$, $(1,1,1)$, and $(2,0,1)$, along with faces given by iterated stellar subdivision (with rays ordered as above). This subdivision is illustrated in Figure \ref{fig:subdiv}, which shows its intersection with the slice $(a,b,1)$. As all rays added are generated by interior lattice points on a height $1$ slice, this resolution is crepant, and the iterated stellar subdivision process ensures that it will be projective. 

The corresponding configuration of exceptional divisors is shown in Figure \ref{fig:exdiv}; divisors which are part of the fibre  $g^{-1}(\infty)$ are shaded. We thus see that the resulting singular fibre over $g^{-1}(\infty)$ has $11$ components: four from the four curves $E_j$ and six from the six intersections $C_i \cap E_j \cap E_k$.

\begin{figure}
  \begin{center}
    \begin{subfigure}{0.49\linewidth}
      \begin{center}
\begin{tikzpicture}
\draw [->] (0,0)--(0,2.5);
\draw [->] (0,0)--(4.5,0);
\draw (0,2)--(4,0);
\draw (0,1)--(1,0);
\draw (1,0)--(0,2);
\draw (1,0)--(1,1);
\draw (2,0)--(0,2);
\draw (3,0)--(1,1);
\draw (3,0)--(2,1);
\draw (3,0)--(0,2);
\draw [fill] (0,0) circle [radius=0.06]; 
\draw [fill] (0,1) circle [radius=0.06]; 
\draw [fill] (0,2) circle [radius=0.06]; 
\draw [fill] (1,0) circle [radius=0.06]; 
\draw [fill] (1,1) circle [radius=0.06]; 
\draw [fill] (2,0) circle [radius=0.06]; 
\draw [fill] (2,1) circle [radius=0.06];  
\draw [fill] (3,0) circle [radius=0.06];    
\draw [fill] (4,0) circle [radius=0.06]; 
\node [right] at (0,2.2) {$(0,2,1)$};
\node [below] at (4,0) {$(4,0,1)$};
\node [below] at (0,0) {$(0,0,1)$};
\end{tikzpicture}
      \end{center}
      \caption{Subdivision of the slice $(a,b,1)$}
      \label{fig:subdiv}
    \end{subfigure}
    \begin{subfigure}{0.49\linewidth}
      \begin{center}
\begin{tikzpicture}
\draw [lightgray, fill=lightgray] (0,1.5)--(0.5,1)--(1,1.5)--(0,2.5);
\draw [lightgray, fill=lightgray] (1,1.5)--(3.5,1.5)--(4.5,2.5)--(0,2.5);
\draw [lightgray, fill=lightgray] (4.5,1.5)--(4,1)--(3.5,1.5)--(4.5,2.5);
\draw [lightgray, fill=lightgray] (1.5,1.5)--(2,1)--(2.5,1)--(3,1.5);
\draw (0.5,0)--(0.5,1);
\draw (0.5,1)--(0,1.5);
\draw (0.5,1)--(1,1.5);
\draw (1,1.5)--(0,2.5);
\draw (1,1.5)--(3.5,1.5);
\draw (1.5,1.5)--(2,1);
\draw (2,1)--(2,0);
\draw (2,1)--(2.5,1);
\draw (2.5,1)--(2.5,0);
\draw (2.5,1)--(3,1.5);
\draw (3.5,1.5)--(4,1);
\draw (3.5,1.5)--(4.5,2.5);
\draw (4,1)--(4,0);
\draw (4,1)--(4.5,1.5);

\end{tikzpicture}
      \end{center}
      \caption{Corresponding exceptional divisors}
      \label{fig:exdiv}
    \end{subfigure}
\caption{Toric resolution in a neighbourhood of a point $C_i \cap E_j \cap E_k$ when $y_i = 2$}
  \end{center}
\end{figure}
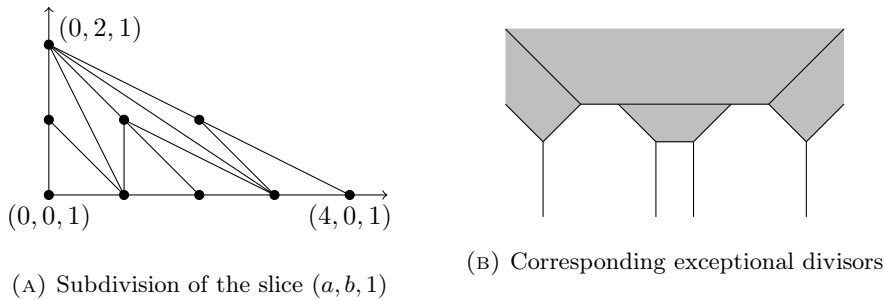

Finally, if $4|y_i$, the lifts of the four curves $E_j$ are smooth in $\calX_g$. Over $\Delta'_{\infty}$, the threefold $\bar{\calX_g}$ thus contains only the six curves of $cA_3$ singularities coming from the pull-backs of the curves $C_i$. These may be crepantly resolved without adding any new components to the fibre of $\bar{\calX_g}$ over $g^{-1}(\infty)$. Thus we see that, in all cases, the threefold $\calX_g$ is smooth over $\Delta'_{\infty}$.

Finally, let $\Delta_{\frac{1}{256}}$ be a disc in $\calM_{M_2}$ around $\lambda = \frac{1}{256}$ and let $\Delta'_{\frac{1}{256}}$ be one of the connected components of its preimage under $g$. Then $g\colon \Delta'_{\frac{1}{256}} \to \Delta_{\frac{1}{256}}$ is an $z_i$-fold cover ramified totally over $\lambda = \frac{1}{256}$, for some $z_i$. 

The threefold $\bar{\calX}_2$ is smooth over $\Delta_{\frac{1}{256}}$ away from the six curves $C_i$ of $cA_3$'s forming sections of the fibration, but its fibre over $\lambda = \frac{1}{256}$ has an additional isolated $A_1$ singularity. Upon proceeding to the $z_i$-fold cover $\Delta'_{\frac{1}{256}} \to \Delta_{\frac{1}{256}}$, this becomes an isolated terminal singularity of type $cA_{z_i-1}$ in $\bar{\calX}_g$. 

Thus $\calX_g$ is smooth away from its fibres over $g^{-1}(\frac{1}{256})$, where it can have isolated terminal singularities. By \cite[Theorem 6.25]{bgav}, we may further assume that $\calX_g$ is $\Q$-factorial. To complete the proof, we note that if $g$ is a local isomorphism over $\Delta_{\frac{1}{256}}$, then $\calX_g$ is also smooth over $g^{-1}(\frac{1}{256})$ and thus smooth everywhere.
\end{proof}

Let $\pi_g\colon \calX_g \to \Proj^1$ denote the fibration induced on $\calX_g$ by the map $\bar{\pi}_g \colon \bar{\calX}_g \to \Proj^1$. By construction, the restriction of $\pi_g$ to the smooth fibres is an $M_2$-polarized family of K3 surfaces, in the sense of \cite[Definition 2.1]{flpk3sm}.  Moreover, we have:

\begin{proposition}\label{Xgsimplyconnected} Let $\calX_g$ be a threefold as in Proposition \ref{KXgtrivial}. Then the cohomology $H^1(\calX_g,\calO_{\calX_g}) = 0$.
\end{proposition}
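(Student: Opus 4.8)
The plan is to compute $H^1(\calX_g, \calO_{\calX_g})$ through the Leray spectral sequence of the fibration $\pi_g\colon \calX_g \to \Proj^1$. First I would record that, since $\calX_g$ is normal, projective and connected with connected K3 fibres, Stein factorisation together with normality of the base gives $\pi_{g*}\calO_{\calX_g} = \calO_{\Proj^1}$. As the base is a curve, $H^p(\Proj^1, -)$ vanishes for $p \geq 2$ and $H^1(\Proj^1, \calO_{\Proj^1}) = 0$, so the edge sequence of the Leray spectral sequence collapses to an isomorphism
\[ H^1(\calX_g, \calO_{\calX_g}) \cong H^0\!\left(\Proj^1, R^1\pi_{g*}\calO_{\calX_g}\right). \]
It therefore suffices to prove that $R^1\pi_{g*}\calO_{\calX_g} = 0$. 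Over the locus where the fibres are smooth K3 surfaces the stalk of this sheaf has fibre $H^1(X_p, \calO_{X_p}) = 0$, since K3 surfaces satisfy $h^{0,1} = 0$; hence $R^1\pi_{g*}\calO_{\calX_g}$ is a torsion sheaf supported on the finite set $g^{-1}\{0, \tfrac1{256}, \infty\}$. By Grauert's theorem and semicontinuity it vanishes provided $h^1(\calO_{Z_p}) = 0$ for the scheme-theoretic fibre $Z_p = \pi_g^*(p)$ over each of these points.

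To analyse these fibre cohomologies I would exploit the trivial canonical sheaf of $\calX_g$ from Proposition \ref{KXgtrivial}. Since $\calX_g$ has terminal (hence Cohen--Macaulay) singularities and $\omega_{\calX_g} \cong \calO_{\calX_g}$ is invertible, $\calX_g$ is Gorenstein; each fibre $Z_p$ is a Cartier divisor, and adjunction gives $\omega_{Z_p} \cong \left(\omega_{\calX_g} \otimes \calO_{\calX_g}(Z_p)\right)\big|_{Z_p} \cong \left(\pi_g^*\calO_{\Proj^1}(1)\right)\big|_{Z_p} \cong \calO_{Z_p}$. Serre duality on $Z_p$ then yields $h^2(\calO_{Z_p}) = h^0(\calO_{Z_p})$, while flatness of $\pi_g$ gives the constant Euler characteristic $\chi(\calO_{Z_p}) = \chi(\calO_{\mathrm{K3}}) = 2$. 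Combining these, $h^1(\calO_{Z_p}) = 2\left(h^0(\calO_{Z_p}) - 1\right)$, so the required vanishing is equivalent to $h^0(\calO_{Z_p}) = 1$, i.e.\ to each scheme-theoretic fibre carrying no spurious global functions.

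For the fibres over $g^{-1}\{0, \tfrac1{256}\}$, and over $g^{-1}(\infty)$ when $y_i \in \{4,8\}$, the fibres are reduced and connected (a nodal quartic K3, a normal-crossings configuration, etc.), so $h^0(\calO_{Z_p}) = 1$ is immediate. The main obstacle is the fibres over $\lambda = \infty$ in the cases $y_i \in \{1,2\}$: here the quadruple fibre of $\bar{\calX}_2$ survives the normalisation as a \emph{non-reduced} fibre of multiplicity $4/\hcf(y_i,4)$, and a priori a multiple fibre can force $h^0(\calO_{Z_p}) > 1$. I expect this to be the crux of the argument. To settle it I would combine the explicit resolutions of Proposition \ref{KXgtrivial} with the fact that the local monodromy around $\lambda = \infty$ is finite of order $4$ and acts non-symplectically — this being exactly the content of the order-$4$ orbifold point of $\calM_{M_2}$ there — so that the monodromy acts with non-empty fixed locus on the K3 fibre and the multiple fibre is a genuine (non-split) quotient-type fibre, forcing $h^0(\calO_{Z_p}) = 1$.

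Alternatively, and perhaps more transparently, I would recast the whole statement topologically. Passing to a smooth projective resolution $\widetilde{\calX}_g$ (which computes the same cohomology since terminal singularities are rational) gives $H^1(\calX_g, \calO_{\calX_g}) = H^1(\widetilde{\calX}_g, \calO_{\widetilde{\calX}_g}) = \tfrac12 b_1(\widetilde{\calX}_g)$, so it is enough to show $\widetilde{\calX}_g$ is simply connected. Because the general fibre is a simply connected K3 surface and the finite monodromies about the degenerate fibres act with fixed points, the multiple fibres create no fundamental group — in sharp contrast with multiple fibres of elliptic surfaces — so $\pi_1(\widetilde{\calX}_g) = 0$ and $b_1 = 0$. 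Either route reduces cleanly to the same geometric input at the fibres over $\lambda = \infty$, which is where I would focus the detailed work.
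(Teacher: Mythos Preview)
Your reduction via the Leray spectral sequence to the vanishing of $R^1(\pi_g)_*\calO_{\calX_g}$ is exactly what the paper does, and your observation that this sheaf is generically zero (hence torsion, supported on finitely many points) is also correct. Where you diverge is in how to kill that torsion. You attempt a fibre-by-fibre analysis, reducing to $h^0(\calO_{Z_p})=1$ for each scheme-theoretic fibre, and you correctly identify the non-reduced fibres over $\lambda=\infty$ (for $y_i\in\{1,2\}$) as the genuine obstacle. However, neither of your proposed resolutions of that obstacle is complete: the claim that finite non-symplectic monodromy with fixed locus forces $h^0(\calO_{Z_p})=1$ for the multiple fibre is a heuristic, not a proof, and the topological route via $\pi_1$ likewise rests on an unproved assertion about multiple fibres contributing nothing to the fundamental group.

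The paper sidesteps all of this with a single stroke: since $\calX_g$ is normal projective with terminal singularities and $\omega_{\calX_g}\cong\calO_{\calX_g}$ is locally free, Koll\'ar's torsion-freeness theorem applies directly to give that $R^1(\pi_g)_*\omega_{\calX_g}=R^1(\pi_g)_*\calO_{\calX_g}$ is torsion-free on $\Proj^1$. A torsion-free sheaf on a curve with trivial generic fibre is zero, and the proof is finished. This is precisely the tool designed to handle the multiple-fibre difficulty you ran into; invoking it replaces your entire second half with two sentences.
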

\begin{proof} Since $H^1(\Proj^1,\calO_{\Proj^1}) = 0$, the proposition will follow immediately from the Leray spectral sequence if we can prove that $R^1(\pi_g)_*\calO_{\calX_g} = 0$.

To show this, we note that $\calX_g$ is a normal projective threefold with at worst terminal singularities and the canonical sheaf $\omega_{\calX_g} \cong \calO_{\calX_g}$ is locally free. So we may apply the torsion-freeness theorem of Koll\'{a}r \cite[Theorem 10.19]{smaf} to see that $R^1(\pi_g)_*\calO_{\calX_g}$ is a torsion-free sheaf on $\Proj^1$. Moreover, since $H^1(X,\calO_X) = 0$ for a generic fibre $X$ of $\pi_g\colon \calX_g \to \Proj^1$, the sheaf $R^1(\pi_g)_*\calO_{\calX_g}$ also has trivial generic fibre. We must therefore have $R^1(\pi_g)_*\calO_{\calX_g} = 0$.
\end{proof}

\begin{corollary} \label{XgCY3}  Let $\calX_g$ be a threefold as in Proposition \ref{KXgtrivial}. If $\calX_g$ is smooth, then $\calX_g$ is a Calabi-Yau threefold.

Conversely, let $\calX \to \Proj^1$ be a Calabi-Yau threefold fibred by K3 surfaces, let $U \subset \Proj^1$ denote the open set over which the fibres of $\calX$ are smooth and let $\calX_U$ denote the restriction of $\calX$ to $U$. Suppose that $\calX_U$ is an $M_2$-polarized family of K3 surfaces \textup{(}in the sense of Definition \ref{defn:L-pol}\textup{)} and that the N\'{e}ron-Severi group of a general fibre of $\calX_U$ is isometric to $M_2$. Then $\calX$ is birational to a threefold $\calX_g$ as in Proposition \ref{KXgtrivial}.
\end{corollary}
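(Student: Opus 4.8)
The forward implication is essentially immediate and I would dispatch it first: by hypothesis $\calX_g$ is smooth and projective, Proposition \ref{KXgtrivial} gives $\omega_{\calX_g} \cong \calO_{\calX_g}$, and Proposition \ref{Xgsimplyconnected} gives $H^1(\calX_g,\calO_{\calX_g}) = 0$. These are exactly the three defining properties, so $\calX_g$ is a Calabi-Yau threefold with nothing further to check.

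For the converse, the plan is to recover the cover $g$ from $\calX$ and then show the Calabi-Yau hypothesis forces it to satisfy the conditions of Proposition \ref{trivialcanonicalprop}. First I would extend the generalized functional invariant: since $\calM_{M_2}$ is proper and $U \subset \Proj^1$ is the complement of finitely many points in a smooth projective curve, the valuative criterion extends $g$ uniquely to a morphism $g\colon \Proj^1 \to \calM_{M_2}$, necessarily a finite cover of some degree $n$ whose ramification is recorded by partitions $[x_i]$, $[y_j]$, $[z_k]$ over $\lambda = 0,\infty,\tfrac{1}{256}$ together with total ramification $r$ elsewhere. Before applying Theorem \ref{gfithm} I would verify that $\calX_U$ is non-isotrivial: were it isotrivial, the Hodge bundle $\pi_*\omega_{\calX/\Proj^1}$ would have non-positive degree, contradicting the relation $\pi_*\omega_{\calX/\Proj^1} \cong \calO_{\Proj^1}(2)$ forced by $\omega_\calX \cong \calO_\calX$ (using $\omega_{\calX/\Proj^1} = \pi^*\omega_{\Proj^1}^{\vee}$ and $\pi_*\calO_\calX = \calO_{\Proj^1}$).

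With non-isotriviality established, Theorem \ref{gfithm} shows $\calX_U \cong g^*\calX_2$ over $U$, so $\calX$ and the normalization $\bar{\calX}_g$ of $g^*\bar{\calX}_2$ both contain this common open family as a dense subset and are therefore birational. The crux is then to show the Calabi-Yau condition forces the numerical constraints of Proposition \ref{trivialcanonicalprop}. The adjunction computation in that proof is valid for any $g$ and yields $\omega_{\bar{\calX}_g} \cong \bar{\pi}_g^*\calO_{\Proj^1}(d)$ with $d = l - 2 + \sum_{i=1}^{l}\bigl(\tfrac{y_i}{\hcf(y_i,4)} - 1\bigr)$, where the relation \eqref{basecovereq} holds automatically by Riemann–Hurwitz for the cover $g\colon \Proj^1 \to \calM_{M_2} \cong \Proj^1$. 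Since $\calX$ is Calabi-Yau it has Kodaira dimension $0$; as the singularities of $\bar{\calX}_g$ are of compound Du Val type (hence canonical), Kodaira dimension is a birational invariant between $\calX$ and $\bar{\calX}_g$, and a pluricanonical section count gives $\kappa(\bar{\calX}_g) = 1, 0, -\infty$ according as $d > 0, = 0, < 0$. Hence $d = 0$, which is precisely the condition of Proposition \ref{trivialcanonicalprop}; Proposition \ref{KXgtrivial} then produces a threefold $\calX_g$, necessarily birational to $\calX$, completing the proof.

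The main obstacle is the step controlling the singularities of $\bar{\calX}_g$. To invoke birational invariance of the Kodaira dimension I must know, \emph{before} establishing $d = 0$, that these singularities are at worst canonical, so that a resolution has nonnegative discrepancies and $\kappa$ may legitimately be read off from $\omega_{\bar{\calX}_g}$. This requires a local analysis of the fibres over $g^{-1}(\infty)$ for arbitrary ramification orders $y_i$, rather than only the values $\{1,2,4,8\}$ treated in Proposition \ref{KXgtrivial}. An alternative that sidesteps $\bar{\calX}_g$ entirely is to argue intrinsically on $\calX$, computing the degree of the Hodge bundle $\pi_*\omega_{\calX/\Proj^1}$ from the ramification data of $g$ and equating it to $2$ via $\omega_\calX \cong \calO_\calX$; this reproduces the same constraint but at the cost of the identical local study near the singular fibres.
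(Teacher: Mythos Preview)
Your forward direction is identical to the paper's: invoke Proposition~\ref{KXgtrivial} for triviality of the canonical sheaf and Proposition~\ref{Xgsimplyconnected} for the vanishing of $H^1(\calX_g,\calO_{\calX_g})$.

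For the converse, the paper's argument is considerably shorter than yours. It simply takes $g$ to be the generalized functional invariant of $\calX$ and invokes Theorem~\ref{gfithm} to conclude that $\calX$ and $\calX_g$ are isomorphic over $U$, hence birational; that is the entire proof. In particular, the paper does \emph{not} explicitly verify that the recovered $g$ satisfies the numerical constraints of Proposition~\ref{trivialcanonicalprop} (so that ``$\calX_g$ as in Proposition~\ref{KXgtrivial}'' makes sense), nor does it check non-isotriviality before applying Theorem~\ref{gfithm}. Your proposal supplies both of these missing steps: you argue non-isotriviality via the degree of the Hodge bundle, and you deduce the constraint $d=0$ from birational invariance of Kodaira dimension together with the explicit formula for $\omega_{\bar{\calX}_g}$. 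You also correctly flag the residual issue, namely that invoking birational invariance of $\kappa$ through $\bar{\calX}_g$ requires knowing its singularities are canonical for \emph{arbitrary} $y_i$, not just for $y_i \in \{1,2,4,8\}$. So your route is the same as the paper's in outline but more scrupulous in execution; what the paper gains in brevity it loses in leaving these points to the reader.
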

\begin{proof}  To prove the first statement note that, by Proposition \ref{KXgtrivial}, $\calX_g$ has trivial canonical bundle. Moreover, $H^1(\calX_g,\calO_{\calX_g}) = 0$ by Proposition \ref{Xgsimplyconnected}. So $\calX_g$ is Calabi-Yau.

The converse statement follows from the fact that, if $g \colon \Proj^1 \to \calM_{M_2}$ denotes the generalized functional invariant of $\calX$, then Theorem \ref{gfithm} shows that $\calX$ and $\calX_g$ are isomorphic over the open set $U$.
\end{proof}

\begin{example} \label{ijex} We now explain how the five Calabi-Yau threefolds fibred by $M_2$-polarized K3 surfaces from \cite[Theorem 5.10]{flpk3sm} fit into this picture. As noted in \cite[Section 5.4]{flpk3sm}, in each of these cases the generalized functional invariant $g\colon \Proj^1 \to \calM_{M_2}$ has the special form 
\[g\colon (s:t) \mapsto \lambda = \frac{As^{i+j}}{t^i(s-t)^j},\]
where $(s:t)$ are coordinates on $\Proj^1$, $A \in \C$ is a modular parameter for the threefold, and  $i,j \in \{1,2,4\}$ are as listed in \cite[Table 1]{flpk3sm}.
 
In our notation from above, this generalized functional invariant map $g$ has $(k,l,m,n,r) = (1,2,i+j,i+j,1)$, $[x_1] = [i+j]$, $[y_1,y_2] = [i,j]$, and $[z_1,\ldots,z_{i+j}] = [1,\ldots,1]$. It follows immediately from Proposition \ref{KXgtrivial} and Corollary \ref{XgCY3} that $\calX_g$ is a smooth Calabi-Yau threefold  for each choice of $i$, $j$, as we should expect. The reason for the special form of the generalized functional invariants $g$ appearing in these cases will be discussed later in Remark \ref{ijrem}.

In particular, we see that the $M_2$-polarized K3 fibration on the quintic mirror threefold appears as a special case of this construction, with $(i,j) = (1,4)$. Its generalized functional invariant $g$ therefore has $(k,l,m,n,r) = (1,2,5,5,1)$, $[x_1] = [5]$, $[y_1,y_2] = [1,4]$, and $[z_1,\ldots,z_5] = [1,1,1,1,1]$.
\end{example}

\section{Hodge Numbers}\label{Hodgesect}

This enables us to construct a large class of Calabi-Yau threefolds $\calX_g$ admitting fibrations by $M_2$-polarized K3 surfaces. We next study the Hodge numbers of these Calabi-Yau threefolds. 

\begin{remark}\label{l=2rem} It is easy to see here that the case $l = 1$, $y_1 = 8$ is a smooth degeneration of the case $l = 2$, $(y_1,y_2) = (4,4)$, corresponding to restriction to a sublocus in moduli. Therefore, when discussing the Hodge numbers of the  Calabi-Yau threefolds $\calX_g$, to avoid pathological cases we will restrict to the case $l = 2$. In this case we have $y_1,y_2 \in \{1,2,4\}$ and $k + m - n - r = 0$.
\end{remark}

\subsection{Leray filtrations and local systems}
\label{sec:Betti}

We begin by developing some general theoretical results that apply to any K3-fibred threefolds, then specialize these to the case that interests us. So let $\pi\colon \calX \to B$ be a smooth projective threefold fibred by K3 surfaces over a smooth complete base curve $B$. Our aim is to compute the Betti numbers $b_i(\calX)$ of $\calX$. 

Let us set up some notation. Denote the fibre of $\calX$ over $p \in B$ by $X_p$, let $\Sigma \subset B$ denote the set of points $p$ over which the fibre $X_p$ is not smooth, and let $U = \Proj^1 - \Sigma$. Let $j\colon U \hookrightarrow B$ denote the natural embedding and let $\pi_U$ denote the restriction of $\pi$ to $\pi^{-1}(U)$. 

With this in place, our first lemma computes $b_2(\calX)$ and $b_4(\calX)$. It should be thought of as a version of the Shioda-Tate formula for K3 surface fibrations. 

\begin{lemma}\label{lemma:leray1,1}
With notation as above, let $\rho_p$ denote the number of irreducible components of $X_p$ and let $\rho_h$ be the rank of the subgroup of $H^2(X_p,\mathbb{C})$, for $p \in U$, that is fixed by the action of monodromy. Then we have
\[b_2(\calX) = b_{4}(\calX) = 1 + \rho_h + \sum_{p\in B} (\rho_p - 1).\]
\end{lemma}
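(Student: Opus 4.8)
The plan is to compute $b_2(\calX)$ directly from the Leray spectral sequence of $\pi$ and then invoke Poincar\'e duality on the smooth projective threefold $\calX$ to conclude $b_4(\calX) = b_2(\calX)$. Write $E_2^{p,q} = H^p(B, R^q\pi_*\C) \Rightarrow H^{p+q}(\calX,\C)$. Since the smooth fibres are K3 surfaces, with $H^1 = H^3 = 0$, the constructible sheaves $R^1\pi_*\C$ and $R^3\pi_*\C$ vanish on $U$ and are therefore supported on the finite set $\Sigma$; being skyscrapers, they are acyclic in positive degrees. As $B$ is a curve, the only terms with $p+q=2$ are $E_2^{2,0} = H^2(B,\C) = \C$, $E_2^{1,1} = H^1(B,R^1\pi_*\C) = 0$, and $E_2^{0,2} = H^0(B,R^2\pi_*\C)$, so $b_2(\calX) = \dim E_\infty^{2,0} + \dim E_\infty^{0,2}$.

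Next I would dispose of the relevant differentials. Since $E_2^{2,1} = 0$ and $B$ is a curve, no differential enters or leaves $E_2^{0,2}$, whence $E_\infty^{0,2} = H^0(B,R^2\pi_*\C)$. The only differential that can affect $E_2^{2,0}$ is $d_2\colon E_2^{0,1} \to E_2^{2,0}$, and $E_2^{0,1} = H^0(B,R^1\pi_*\C)$ need not vanish. To kill $d_2$ I observe that the edge map $E_2^{2,0} = H^2(B,\C) \to H^2(\calX,\C)$ is the pullback $\pi^*$, and $\pi^*$ is injective: its image $\pi^*[\mathrm{pt}]$ is the fibre class $[X_p]$, which satisfies $\int_\calX [X_p]\cdot A^2 = (A|_{X_p})^2 > 0$ for an ample class $A$ on $\calX$. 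Hence the surjection $E_2^{2,0} \twoheadrightarrow E_\infty^{2,0}$ is an isomorphism, giving $\dim E_\infty^{2,0} = 1$ and $b_2(\calX) = 1 + \dim H^0(B,R^2\pi_*\C)$.

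The heart of the argument is the computation $\dim H^0(B,R^2\pi_*\C) = \rho_h + \sum_{p\in B}(\rho_p-1)$. Let $\mathbb{V} = (R^2\pi_*\C)|_U$ be the weight-two local system on $U$ and consider the adjunction morphism $R^2\pi_*\C \to j_*\mathbb{V}$, which is an isomorphism over $U$. On the stalk at $p\in\Sigma$ it is the specialization map $H^2(X_p,\C) \to \mathbb{V}^{T_p} = H^2(X_t,\C)^{T_p}$ onto the local monodromy invariants, which is surjective by the Local Invariant Cycle Theorem. Thus there is a short exact sequence $0 \to \mathcal{K} \to R^2\pi_*\C \to j_*\mathbb{V} \to 0$ with $\mathcal{K}$ a skyscraper supported on $\Sigma$. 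Taking global sections and using $H^1(B,\mathcal{K}) = 0$ gives $\dim H^0(B,R^2\pi_*\C) = \dim H^0(B,j_*\mathbb{V}) + \sum_{p\in\Sigma}\dim\mathcal{K}_p$, where $\dim H^0(B,j_*\mathbb{V}) = \dim \mathbb{V}^{\pi_1(U)} = \rho_h$ is exactly the monodromy-invariant rank from the statement.

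It remains to show $\dim\mathcal{K}_p = \rho_p - 1$, and I expect this local computation to be the main obstacle, chiefly because the lemma allows arbitrary (possibly non-reduced, non--normal-crossings) fibres, so a naive appeal to the Clemens--Schmid sequence for semistable degenerations is unavailable. I would instead work over a disc $\Delta$ about $p$, with total space $\calX_\Delta \simeq X_p$, and restrict to the punctured disc. The Wang sequence (using $H^1(X_t) = 0$) identifies $H^2(\calX_{\Delta^*},\C)$ with $H^2(X_t,\C)^{T_p}$, so $\mathcal{K}_p = \ker\bigl(H^2(\calX_\Delta) \to H^2(\calX_{\Delta^*})\bigr)$. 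The long exact sequence of the pair together with Lefschetz duality $H^2(\calX_\Delta,\calX_{\Delta^*}) \cong H_4(\calX_\Delta) \cong H_4(X_p)$ then identifies $\mathcal{K}_p$ with the image of the Gysin map $H_4(X_p) \to H^2(X_p)$, $[Y_i]\mapsto [Y_i]|_{X_p}$. Since $\dim H_4(X_p,\C) = \rho_p$ (one class per irreducible component) and the kernel of this map is the image of the one-dimensional group $H^1(\calX_{\Delta^*},\C)$ (the angular class on $\Delta^*$), spanned precisely by the fibre relation $\sum_i m_i[Y_i] = [X_p]$ (which restricts to $0$ since $X_p$ has trivial normal bundle in $\calX$), the Gysin map has rank $\rho_p - 1$. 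Summing over $\Sigma$, noting $\rho_p = 1$ for $p\in U$, and combining with the previous steps yields $b_2(\calX) = 1 + \rho_h + \sum_{p\in B}(\rho_p-1)$, and Poincar\'e duality gives the same value for $b_4(\calX)$.
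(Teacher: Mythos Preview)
Your argument is correct and takes a genuinely different route from the paper's. The paper computes $b_4(\calX)$ directly from Leray: it invokes Zucker's $E_2$-degeneration theorem, computes $h^0(B,R^4\pi_*\C)=1+\sum_{s\in\Sigma}(\rho_s-1)$ via the Mayer--Vietoris spectral sequence on the singular fibres (which requires normal crossings), obtains $h^2(B,R^2\pi_*\C)=\rho_h$ by a $j_!$/Verdier duality argument, and then reduces the general case to normal crossings by blowing up via Hironaka, tracking how both $b_4$ and the component count change. You instead compute $b_2(\calX)$: you dispose of the single relevant $d_2$ by hand using injectivity of $\pi^*$ (so you never need Zucker's degeneration), and you compute $h^0(B,R^2\pi_*\C)=\rho_h+\sum_{p}(\rho_p-1)$ by combining the local invariant cycle theorem with a local Lefschetz-duality/Wang-sequence analysis that identifies the stalk $\mathcal{K}_p$ with the image of the Gysin map $H_4(X_p)\to H^2(X_p)$ and pins down its one-dimensional kernel as the fibre relation. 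The main gain of your approach is that it never touches the normal-crossings hypothesis or the Hironaka reduction: the local invariant cycle theorem and Lefschetz duality hold because the total space $\calX$ is smooth, and $\dim H_4(X_p,\C)=\rho_p$ for any pure two-dimensional fibre. The paper's approach, by contrast, makes the cohomology of the singular fibres explicit via Mayer--Vietoris, which is informative and reused later (Lemma~\ref{lemma:fibers}), but costs the extra reduction step here.
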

\begin{proof}
We begin by assuming that each singular fibre of $\calX$ is normal crossings. We address the non-normal crossings situation at the end of the proof.

We compute $b_4(\calX)$ using the Leray spectral sequence. By \cite[Corollary 15.15]{htdcl2cpm}, this spectral sequence degenerates at the $E_2$ term, so we may write
\[b_4(\calX) = h^0(B, R^4\pi_*\mathbb{C}) +  h^1(B, R^3\pi_*\mathbb{C}) +  h^2(B , R^2\pi_*\mathbb{C}).\]
We will deal with each term in succession.

To compute these terms we will make repeated use of the following fact. By \cite[Proposition 15.12]{htdcl2cpm}, for each $n \geq 0$ there is a surjective map of constructible sheaves 
\[R^n \pi_*\mathbb{C} \longrightarrow j_* R^n (\pi_U)_*\mathbb{C}.\]
The kernel of this map is a skyscraper sheaf $\calS_n$ supported on $\Sigma$, so we have a short exact sequence
\begin{equation}\label{eq:skyses}
0 \longrightarrow \calS_n \longrightarrow R^n \pi_*\mathbb{C} \longrightarrow j_* R^n(\pi_U)_*\mathbb{C} \longrightarrow 0
\end{equation}

Now we begin by computing $H^0(B, R^4\pi_*\mathbb{C})$. Since $\pi_U$ is a topologically trivial fibration, we have that $R^4 (\pi_U)_* \mathbb{C}$ is a constant sheaf with stalk $\C$, so $j_* R^4(\pi_U)_*\mathbb{C}$ is also. Taking cohomology in \eqref{eq:skyses} (for $n = 4$), we thus obtain a short exact sequence of cohomology groups
\[0 \longrightarrow H^0(B,\calS_4) \longrightarrow H^0(B, R^4\pi_*\mathbb{C}) \longrightarrow \mathbb{C} \longrightarrow 0.\]
Since $\calS_4$ is a skyscraper sheaf supported on $\Sigma$, it suffices to compute $H^0(B,\calS_4)$ locally around each point of $\Sigma$. So choose a small neighbourhood $U_s$ around each $s \in \Sigma$. The Clemens contraction theorem implies that $H^0(U_s,R^4\pi_*\mathbb{C}) = H^{4}(\pi^{-1}(U_s),\mathbb{C}) \cong H^{4}(X_s,\mathbb{C})$. 

Since $X_s$ is normal crossings,  the rank of $H^{4}(X_s,\mathbb{C})$ can be computed using the Mayer-Vietoris spectral sequence, as described by Griffiths and Schmid in \cite[Section 4]{rdhtdtr}, as follows. Let $\{S_i\}$ denote the set of irreducible components of $X_s$, then define $(X_s)_{i_1,\dots,i_p} = \bigcap_{i_0 \dots, i_p} S_{i_j}$ for a disjoint set of indices $i_0,\dots,i_p$ and let $X_s^{[p]} =\coprod_{i_0< \dots < i_p} (X_s)_{i_0,\dots,i_p}$. The $E_1$ term of the Mayer-Vietoris spectral sequence is given by $E_1^{p,q} = H^q(X_s^{[p]},\mathbb{C})$.

This spectral sequence degenerates at $E_2$ and converges to $H^{p+q}(X_s,\mathbb{C})$. Computing $H^{4}(X_s,\mathbb{C})$ thus reduces to computing $E_1^{i,4-i} = H^{4-i}(X^{[i]}_s,\mathbb{C})$ for each $0 \leq i \leq 4$. Note that this term vanishes for $i \neq 0$ for dimension reasons, so it follows that $E_1^{i,4-i} = 0$ if $i \neq 0$ and $E_1^{0,4} = H^{4}(X^{[0]}_s,\mathbb{C}) \cong \mathbb{C}^{\rho_s}$. We therefore see that $H^{4}(X_s,\mathbb{C}) \cong \mathbb{C}^{\rho_s}$, which gives $H^0(U_s,\calS_4) \cong \C^{\rho_s -1}$. 

Putting everything together, we obtain an exact sequence
\[0 \longrightarrow \bigoplus_{s \in \Sigma} \C^{\rho_s -1} \longrightarrow H^0(B, R^4\pi_*\mathbb{C}) \longrightarrow \mathbb{C} \longrightarrow 0,\]
and taking dimensions gives
\[h^0(B,R^4f_* \mathbb{C}) = 1+ \sum_{s\in \Sigma} (\rho_s-1).\]

Next we compute $H^1(B, R^3\pi_*\mathbb{C})$. Taking cohomology of the exact sequence \eqref{eq:skyses} (for $n = 3$), we obtain an isomorphism
\[H^1(B, R^3 \pi_*\mathbb{C}) \cong H^1(B, j_*R^3 (\pi_U)_*\mathbb{C}).\]
But $R^3 (\pi_U)_*\mathbb{C}$ is trivial, since every fibre of $\pi_U$ is a smooth K3 surface, therefore $j_* R^3 (\pi_U)_*\mathbb{C}$ is the trivial sheaf on $B$ and $H^1(B, j_* R^3(\pi_U)_*\mathbb{C}) = 0$.

To compute $H^2(B, R^2\pi_*\mathbb{C})$, we use the following standard exact sequence
\[0 \rightarrow j_! R^2(\pi_U)_*\mathbb{C} \rightarrow R^2\pi_*\mathbb{C} \rightarrow i_* ((R^2\pi_*\mathbb{C})|_{\Sigma}) \rightarrow 0\]
where $i\colon \Sigma \hookrightarrow B$ denotes the inclusion. Again, $i_* ((R^2\pi_*\mathbb{C})|_{\Sigma})$ is a skyscraper sheaf supported on $\Sigma$, so taking cohomology we deduce that 
\[H^2(B,j_! R^2(\pi_U)_*\mathbb{C} ) \cong H^2(B, R^2\pi_*\mathbb{C}).\]
It is then known that $H^2(B, j_! R^2(\pi_U)_* \mathbb{C})$ is just $H^2_c(U, R^2 (\pi_U)_*\mathbb{C})$, where $H^2_c$ denotes compactly supported cohomology. Verdier duality can then be applied to give that
\[H^2_c(U, R^2(\pi_U)_*\mathbb{C}) \cong H^0(U, \mathscr{H}\!\mathit{om}(R^2 (\pi_U)_* \mathbb{C},\mathbb{C})).\]
Finally, we recognize that classes in $H^0(U, \mathscr{H}\!\mathit{om}(R^2(\pi_U)_*\mathbb{C},\mathbb{C}))$ are in bijection with classes in $R^2 (\pi_U)_* \mathbb{C}$ which are fixed by the monodromy representation. Therefore
\[h^2(B, R^2\pi_*\mathbb{C}) = h^2(X_p,\mathbb{C})^{\pi_1(U, p)} = \rho_h\]
as required. This completes the proof when all singular fibres of $\calX$ have normal crossings. 

If $\calX$ does not have normal crossings singular fibres, Hironaka's theorem on embedded resolutions says that we may repeatedly blow up smooth loci in $\calX$ until its fibres are normal crossings. Let $\widetilde{\calX} \rightarrow \calX$ be such a blow-up, and assume that we have blown up $r$ times. Then $h^{4}(\widetilde{\calX}) = h^4(\calX) + r$. Furthermore, each blow-up adds only one component to a singular fibre, so if $\tilde{\pi}\colon \widetilde{\calX} \to B$ is the induced fibration  and $\sigma_p$ is the number of irreducible components in $\tilde{\pi}^{-1}(p)$, then 
\[\sum_{p \in B} ( \rho_p - 1) + r = \sum_{p \in B}(\sigma_p - 1).\]
Therefore the result for $\calX$ follows from the result for $\widetilde{\calX}$.
\end{proof}

\begin{remark}
This proof can be repeated for any fibration whose fibres are generically smooth and satisfy $H^1(X_p,\mathbb{C}) = 0$.
\end{remark}

Next we use similar methods to prove a result that will allow us to compute $b_3(\calX)$.

\begin{lemma}
\label{lemma:fibers}
With notation as above, if every fibre $X_p$ of $\pi$ is either:
\begin{enumerate}
\item a K3 surface with at worst ADE singularities, or
\item a normal crossings union of smooth surfaces $S_i$, with $H^3(S_i,\mathbb{C}) = 0$,
\end{enumerate}
then $b_3(\calX) = h^1(B, j_* R^2(\pi_U)_* \mathbb{C})$.
\end{lemma}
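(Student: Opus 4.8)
The plan is to mimic the Leray spectral sequence computation of Lemma \ref{lemma:leray1,1}, but now extracting the degree-$3$ cohomology. Since the Leray spectral sequence for $\pi$ degenerates at $E_2$ by \cite[Corollary 15.15]{htdcl2cpm}, and since $B$ is a curve so that $H^p(B,-)$ vanishes for $p>2$, I would write
\[ b_3(\calX) = h^0(B, R^3\pi_*\mathbb{C}) + h^1(B, R^2\pi_*\mathbb{C}) + h^2(B, R^1\pi_*\mathbb{C}). \]
The plan is then to dispose of the first and third terms and to identify the middle one with the claimed expression. The last term is immediate: over $U$ the fibres are K3 surfaces, so $R^1(\pi_U)_*\mathbb{C} = 0$, and the sequence \eqref{eq:skyses} (for $n=1$) shows that $R^1\pi_*\mathbb{C}$ is a skyscraper sheaf $\calS_1$ supported on $\Sigma$; skyscrapers have vanishing higher cohomology, so $h^2(B, R^1\pi_*\mathbb{C}) = 0$. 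For the middle term, the sequence \eqref{eq:skyses} (for $n=2$) reads $0 \to \calS_2 \to R^2\pi_*\mathbb{C} \to j_*R^2(\pi_U)_*\mathbb{C} \to 0$; since $\calS_2$ is a skyscraper, $H^1(B,\calS_2) = H^2(B,\calS_2) = 0$, so the long exact sequence gives $H^1(B, R^2\pi_*\mathbb{C}) \cong H^1(B, j_*R^2(\pi_U)_*\mathbb{C})$, which is exactly the right-hand side of the lemma.

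The real content is the vanishing of the first term, which amounts to proving $R^3\pi_*\mathbb{C} = 0$. Over $U$ the fibres are K3 surfaces, so $R^3(\pi_U)_*\mathbb{C} = 0$, and hence \eqref{eq:skyses} (for $n=3$) identifies $R^3\pi_*\mathbb{C}$ with the skyscraper $\calS_3$ whose stalk at $s\in\Sigma$ is, by the Clemens contraction theorem, $H^3(X_s,\mathbb{C})$. It therefore suffices to show $H^3(X_s,\mathbb{C}) = 0$ for every fibre, and this is where the two hypotheses enter. In case (1), where $X_s$ is a K3 surface with ADE singularities, I would use that these are rational double points: the minimal resolution $\widetilde{X_s}$ is a smooth K3 surface obtained by replacing the singular points with trees of $(-2)$-curves, which alter only $H^2$, so $H^3(X_s,\mathbb{C}) = H^3(\widetilde{X_s},\mathbb{C}) = 0$. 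In case (2) I would compute $H^3(X_s,\mathbb{C})$ with the same Mayer--Vietoris spectral sequence $E_1^{p,q} = H^q(X_s^{[p]},\mathbb{C})$ used in Lemma \ref{lemma:leray1,1}, which degenerates at $E_2$.

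For $H^3$ the contributing terms are $E_2^{p,3-p}$ with $0\le p\le 3$. The term $E_2^{0,3}$ vanishes because $E_1^{0,3} = \bigoplus_i H^3(S_i,\mathbb{C}) = 0$ by hypothesis; the terms $E_2^{2,1}$ and $E_2^{3,0}$ vanish because $X_s$ is a normal crossings divisor in the smooth threefold $\calX$, so at most three components meet at a point, whence the double loci $X_s^{[1]}$ are curves, the triple loci $X_s^{[2]}$ are points, and $X_s^{[3]} = \varnothing$, giving $E_1^{2,1} = H^1(\text{points}) = 0$ and $E_1^{3,0} = H^0(\varnothing) = 0$. The one remaining term $E_2^{1,2}$, the cokernel of the restriction map $\bigoplus_i H^2(S_i,\mathbb{C}) \to \bigoplus_{i<j} H^2(S_i\cap S_j,\mathbb{C})$, is the main obstacle. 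I expect it to vanish because each double curve $S_i\cap S_j$ is an effective divisor on the projective surface $S_i$, hence homologically nontrivial, so that each individual restriction $H^2(S_i,\mathbb{C})\to H^2(S_i\cap S_j,\mathbb{C})\cong\mathbb{C}$ is surjective. The delicate point, which I would settle using the explicit fibre geometry produced in Proposition \ref{KXgtrivial} (or a Clemens--Schmid argument exploiting that $H^3_{\lim}=0$ for a degeneration of K3 surfaces), is upgrading these individual surjections to surjectivity of the assembled map. Once that is done one has $E_2^{1,2}=0$, hence $H^3(X_s,\mathbb{C})=0$ and $R^3\pi_*\mathbb{C}=0$, which finishes the computation.
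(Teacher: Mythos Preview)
Your Leray spectral sequence setup and the treatment of the $H^2(B,R^1\pi_*\mathbb{C})$ and $H^1(B,R^2\pi_*\mathbb{C})$ terms match the paper exactly, as does the case (1) argument. The divergence is in case (2): you aim to prove $H^3(X_s,\mathbb{C}) = 0$ outright by showing $E_2^{1,2} = 0$ in the Mayer--Vietoris spectral sequence, and you correctly isolate the surjectivity of the assembled restriction map $\bigoplus_i H^2(S_i,\mathbb{C}) \to \bigoplus_{i<j} H^2(S_i\cap S_j,\mathbb{C})$ as the obstacle --- but you do not resolve it.

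The paper sidesteps this computation. Instead of attacking $E_2^{1,2}$, it invokes a result of Zucker identifying the local contribution $H^0(U_s,\calS_3)$ with the image of a morphism of mixed Hodge structures
\[
\phi_s\colon H^3(\calX_{\Delta_s},\partial\calX_{\Delta_s}) \longrightarrow H^3(X_s),
\]
where the source satisfies $W_2 = 0$. Strictness of morphisms of mixed Hodge structures then gives $\mathrm{im}(\phi_s)\cap W_2 H^3(X_s) = 0$, so $\mathrm{im}(\phi_s)$ injects into $\Gr^W_3 H^3(X_s)$. But $\Gr^W_3 H^3(X_s)$ is a subquotient of $E_1^{0,3} = \bigoplus_i H^3(S_i,\mathbb{C})$, which vanishes by hypothesis. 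Thus the paper needs only the trivial vanishing of $E_1^{0,3}$, never $E_2^{1,2}$; the weight filtration does the work that your direct surjectivity argument would have to do by hand.

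Your Clemens--Schmid suggestion is in the right spirit --- it would likewise produce a weight constraint forcing $H^3(X_s)$ into $\Gr^W_3$ --- but you have not carried it out. Your other suggestion, appealing to the explicit fibre descriptions of Proposition~\ref{KXgtrivial}, would establish the result only for the particular threefolds $\calX_g$ and not the lemma as stated.
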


\begin{proof}
As in the proof of Lemma \ref{lemma:leray1,1}, we use the degeneration of the Leray spectral sequence for the fibration $\pi\colon \calX \rightarrow B$. This tells us that
\[b_3(\calX) = h^0(B, R^3\pi_*\mathbb{C}) +h^1(B, R^2\pi_*\mathbb{C}) + h^2(B, R^1\pi_*\mathbb{C}).\]
As usual, we have the short exact sequence \eqref{eq:skyses}, which reads
\[0 \longrightarrow \mathcal{S}_n \rightarrow R^n \pi_*\mathbb{C} \longrightarrow j_*R^n (\pi_U)_* \mathbb{C} \longrightarrow 0.\]
Recall that $\mathcal{S}_n$ is a skyscraper sheaf supported on $\Sigma$.

Since $R^1 (\pi_U)_*\mathbb{C}$ is trivial, so is $j_* R^1 (\pi_U)_* \mathbb{C}$. Therefore, taking cohomology of the above sequence (for $n=1$), we obtain that $H^2(B, R^1\pi_*\mathbb{C})$ must also be trivial. A similar argument can be used to show that $H^1(B, R^2\pi_*\mathbb{C})$ is isomorphic to $H^1(B, j_* R^2(\pi_U)_*\mathbb{C})$.

It remains to show that $H^0(B, R^3\pi_*\mathbb{C})$ is trivial. Noting that $j_*R^3 (\pi_U)_* \mathbb{C}$ is trivial, taking cohomology in the exact sequence above (for $n=3$) gives an isomorphism $H^0(B, R^3\pi_*\mathbb{C}) \cong H^0(B,\calS_3)$ (this last group is called $A$ in \cite{htdcl2cpm}).

Since $\calS_3$ is a skyscraper sheaf on $\Sigma$, it suffices to show that $H^0(B,\calS_3)$ is trivial locally around each point of $\Sigma$. So let $s \in \Sigma$ and let $U_s \subset B$ be a small neighbourhood of $s$. The Clemens contraction theorem implies that $H^0(U_s,R^3\pi_*\C) = H^3(\pi^{-1}(U_s),\C) \cong H^3(X_s,\C)$, which is trivial if $X_s$ is a fibre satisfying case (1) of the lemma, so $ H^0(U_s,\calS_3)$ is also trivial for such fibres.

We may therefore restrict ourselves fibres satisfying case (2) of the lemma. Zucker \cite[Section 15]{htdcl2cpm} shows how to compute $H^0(U_s,\calS_3)$ in a neighbourhood of such fibres, as follows. Let $s \in \Sigma$ be a point over which the fibre satisfies the conditions of case (2) and let $\Delta_s \subset U_s$ be a small closed disc around $s$. Let  ${\calX}_{\Delta_s} = {\pi}^{-1}({\Delta_s})$ and let $\partial {\calX}_{\Delta_s}$ be the boundary of ${\calX}_{\Delta_s}$. Then Zucker \cite[Section 15]{htdcl2cpm} proves that $H^0(U_s,\calS_3)$ may be computed as the image of a morphism of mixed Hodge structures,
\[H^0(U_s,\calS_3) = \mathrm{im}\left( \phi_s\colon H^3({\calX}_{\Delta_s}, \partial {\calX}_{\Delta_s}) \rightarrow H^3({X}_s)\right),\]
from which it follows that $H^0(U_s,\calS_3)$ admits a pure Hodge structure of weight $3$. The exact definition of the map $\phi_s$ appearing here will not concern us, as we only need to use the fact that it is a morphism of mixed Hodge structures; the interested reader may refer to \cite[Section 15]{htdcl2cpm} for more details. 

To show that this image is trivial, we use the Mayer-Vietoris spectral sequence, as described in the proof of Lemma \ref{lemma:leray1,1}. Recall that this is a spectral sequence which degenerates to $H^{p+q}(X_s,\mathbb{C})$ at the $E_2$ term and satisfies $E_1^{p,q} = H^q(X_s^{[p]},\mathbb{C}),$
where $X_s^{[p]}$ is the disjoint union of all codimension $p$ strata in $X_s$.  Its graded pieces $\Gr^W_i$ are the weight-graded pieces of the functorial mixed Hodge structure on $X_s$. Thus only $H^3(X_s^{[0]},\mathbb{C})$, $H^2(X_s^{[1]},\mathbb{C})$ and $H^1(X_s^{[2]},\mathbb{C})$ may contribute to $H^3(X_s,\mathbb{C})$. Moreover, by the condition that $H^3(S_i,\mathbb{Q}) = 0$ we obtain $\Gr^W_3 H^3(X_s,\mathbb{C})= H^3(X_s^{[0]},\mathbb{C}) = 0$. 

Zucker \cite[Section 15]{htdcl2cpm} notes that the weight filtration on $H^3({\calX}_{\Delta_s}, \partial {\calX}_{\Delta_s})$ has $W_i = 0$ for $i \leq 2$. By strictness, we thus see that 
\[\mathrm{im}(\phi_s) \cap W_2(H^3({X}_s,\mathbb{C})) = 0\]
and
\[\phi_s(W_i H^3({\calX}_{\Delta_s}, \partial {\calX}_{\Delta_s})) = \mathrm{im}(\phi_s) \cap W_i(H^3(X_s)) = \mathrm{im}(\phi_s) \cap W_3(H^3(X_s))\]
for all $i\geq 3$. So in particular
\[\mathrm{im}(\phi_s) = \phi_s(W_3 H^3(\calX_{\Delta_s}, \partial \calX_{\Delta_s})) \subset  W_3(H^3(X_s))\]
and the map
\[W_3(H^3(X_s)) \longrightarrow \Gr^W_3 (H^3(X_s))\]
is injective on the image of $\phi_s$. Thus the image of the induced map 
\[W_3H^3(\calX_{\Delta_s}, \partial \calX_{\Delta_s})\stackrel{\phi_s}{\longrightarrow} W_3(H^3(X_s)) \longrightarrow \Gr^W_3 (H^3(X_s)) =0\]
is equal to the image of $\phi_s$, so $H^0(U_s,\calS_3)$ is trivial. 
\end{proof}

\subsection{The Hodge number $h^{1,1}$}

Using Lemma \ref{lemma:leray1,1}, the Hodge number $h^{1,1}$ is relatively easy to compute. 

\begin{proposition} \label{Xgh11} Let $\calX_g$ be a Calabi-Yau threefold as in Corollary \ref{XgCY3} and suppose that $g^{-1}(\infty)$ consists of two points \textup{(}so that $l=2$\textup{)}. Then 
\[h^{1,1}(\calX_g) = 20 + \sum_{i=1}^{k}(2x_i^2 + 1) + c_1 + c_2, \]
where $[x_1,\ldots,x_k]$ is the partition of $n$ encoding the ramification profile of $g$ over $\lambda = 0$ and $c_1$, $c_2$ are given in terms of the partition $[y_1,y_2]$ of $n$ encoding the ramification profile of $g$ over $\lambda = \infty$ by $c_j = 30$ \textup{(}resp. $10$, $0$\textup{)} if and only if $y_j = 1$ \textup{(}resp. $2$, $4$\textup{)}.
\end{proposition}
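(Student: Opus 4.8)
The plan is to apply Lemma \ref{lemma:leray1,1}. Since $\calX_g$ is a smooth Calabi--Yau threefold we have $h^{2,0}(\calX_g) = h^{0,2}(\calX_g) = 0$, so $b_2(\calX_g) = h^{1,1}(\calX_g)$, and Lemma \ref{lemma:leray1,1} gives
\[ h^{1,1}(\calX_g) = 1 + \rho_h + \sum_{p \in \Proj^1}(\rho_p - 1), \]
with $\rho_h$ the rank of the monodromy-invariant part of $H^2(X_p,\C)$ for a general fibre and $\rho_p$ the number of irreducible components of the fibre over $p$. Everything then reduces to computing $\rho_h$ and counting the components of the singular fibres. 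First I would show $\rho_h = 19$. The $M_2$-polarization provides a trivial local subsystem of rank $\rank M_2 = 19$ inside the monodromy invariants, so $\rho_h \geq 19$. For the opposite inequality, the orthogonal complement $M_2^\perp$ has rank $3$ and, for a general fibre, supports an irreducible rational Hodge structure on which the monodromy acts irreducibly, exactly as in the proof of Theorem \ref{gfithm}. Since $\calX_g$ is non-isotrivial the monodromy acts nontrivially on $M_2^\perp \otimes \C$, so by irreducibility it has no nonzero invariants there. Hence $\rho_h = 19$ and $1 + \rho_h = 20$, producing the leading term of the formula.

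Next I would dispose of the fibres that contribute nothing. Generic fibres, the fibres over ramification points of $g$ lying over $\lambda \notin \{0, \tfrac{1}{256}, \infty\}$, and --- because $\calX_g$ is smooth, so $g$ is unramified over $\tfrac{1}{256}$ --- the fibres over $g^{-1}(\tfrac{1}{256})$, which are irreducible K3 surfaces with a single node, all have $\rho_p = 1$ and contribute $0$. For a point over $\lambda = \infty$ of ramification $y_j \in \{1,2,4\}$, the explicit resolutions in the proof of Proposition \ref{KXgtrivial} give singular fibres with $31$, $11$, and $1$ components respectively, so the contribution $\rho_p - 1$ is exactly $c_j = 30, 10, 0$; summing over the two points of $g^{-1}(\infty)$ gives $c_1 + c_2$.

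The remaining and main step is the fibres over $g^{-1}(0)$. Over a point of ramification $x_i$ the fibre is obtained by pulling back the tetrahedral fibre of $\bar{\calX}_2$ over $\lambda=0$ --- four planes meeting along six double curves $D_1,\dots,D_6$ in four triple points --- to the $x_i$-fold cover and crepantly resolving the six resulting curves of $cA_{x_i-1}$ singularities. I would count components locally. Along a double curve, away from the triple points, the local model is $\{uv = t^{x_i}\}\times\C$, whose crepant resolution inserts $x_i-1$ new components over each $D_j$. Near a triple point the local model is the Gorenstein toric singularity $\{uvw = t^{x_i}\}\subset\C^4$; its crepant resolutions are the unimodular triangulations at height one of the lattice triangle $\mathrm{conv}\{(0,0),(x_i,0),(0,x_i)\}$, and because the central fibre $\{t=0\}$ is the reduced sum of the toric divisors attached to the lattice points of this triangle, its components are indexed by those lattice points: the three vertices recover the three planes through the triple point, the $x_i-1$ interior points of each edge recover the edge divisors already counted, and the $\binom{x_i-1}{2}$ strictly interior points give genuinely new components.

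Assembling these contributions over the tetrahedron gives
\[ \rho_p = 4 + 6(x_i - 1) + 4\binom{x_i-1}{2} = 2x_i^2 + 2, \]
so each such fibre contributes $2x_i^2 + 1$, and summing over the $k$ points of $g^{-1}(0)$ yields $\sum_{i=1}^k (2x_i^2+1)$. Combining the three contributions gives the stated formula. I expect the toric component count over $\lambda = 0$ to be the main obstacle: one must correctly identify which toric divisors lie in the central fibre and, crucially, glue the local pictures over the six edges and four triple points of the tetrahedron so that the shared edge divisors are counted once rather than duplicated.
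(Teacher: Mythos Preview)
Your approach is essentially the paper's: apply Lemma~\ref{lemma:leray1,1}, show $\rho_h=19$, and count fibre components, citing the explicit resolutions in the proof of Proposition~\ref{KXgtrivial} for the fibres over $\lambda=\infty$ and doing the toric count over $\lambda=0$ (your $4+6(x_i-1)+4\binom{x_i-1}{2}$ agrees with the paper's $4+6(x-1)+2(x-1)(x-2)$, which the paper obtains by citing Friedman rather than redoing the toric combinatorics). Two small points where your justifications diverge from the paper and are a bit loose: first, the implication ``$\calX_g$ smooth $\Rightarrow$ $g$ unramified over $\tfrac{1}{256}$'' is not established anywhere---Proposition~\ref{KXgtrivial} only gives the converse---so you should instead argue, as the paper does, that the fibres over $g^{-1}(\tfrac{1}{256})$ are irreducible regardless (any resolution of the isolated $cA_{z_i-1}$ there is small). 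Second, your argument for $\rho_h=19$ invokes ``irreducibility of the monodromy on $M_2^\perp$, exactly as in the proof of Theorem~\ref{gfithm}'', but that proof concerns the action of a fibre \emph{automorphism}, not the monodromy; the paper instead argues that the monodromy group of $\calX_g$ is infinite (being finite index in that of $\calX_2$) and then appeals to \cite[Theorem~3.1]{hgk3s} to rule out $\rho_h=20$.
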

\begin{proof} Using Lemma \ref{lemma:leray1,1} we see that 
\[h^{1,1}(\calX_g) = b_2(\calX_g) = \rho_h + 1 + \sum_{p\in \Proj^1} (\rho_p - 1).\] 

We first compute $\rho_h$. Since $\calX_g$ is a compactification of the pullback of $\calX_2$ by $g$, it follows that $\rho_h \geq 19$. Suppose for a contradiction that $\rho_h = 20$. Note that the monodromy group of $\calX_g$ must be infinite, as it is a finite index subgroup of the monodromy group of $\calX_2$, which is infinite by \cite[Theorem 4.2]{mmfk3sis3drp}. So we may apply \cite[Theorem 3.1]{hgk3s} to see that every smooth fibre of $\calX_g$ must have Picard rank $20$. But this is clearly not the case. Therefore $\rho_h = 19$.

It remains to compute the number of irreducible components $\rho_p$ of the fibre of $\calX_g$ over each choice of $p \in \Proj^1$. Such fibres are irreducible over all points $p \in \Proj^1$ with $g(p) = \lambda \notin \{0,\infty\}$, so we only need to consider the points $p$ outside this set. First suppose that $p$ is a point with $g(p) = 0$ and let $x$ denote the order of ramification of $g$ at $p$. The fibre of $\calX_2$ over $\lambda = 0$ is semistable, with four components arranged as a tetrahedron. So the fibre of $\calX_g$ over $p$ is isomorphic to the resolution of the pull-back of such a fibre by a base change $t \mapsto t^x$, where $t$ is a local coordinate around $p \in \Proj^1$.

Pull-backs of semistable fibres of this kind were computed by Friedman \cite[Section 1]{bgd7}. By \cite[Proposition 1.2]{bgd7}, we see that the fibre of $\calX_g$ over $p$ has
\begin{itemize}
\item $4$ components that are strict transforms of the original $4$,
\item $6(x-1)$ new components arising from the blow-ups of the six curves of $cA_{x-1}$ singularities that occur along the pull-backs of the six edges of the tetrahedron, and
\item $2(x-1)(x-2)$ new components arising from the blow-ups of the pull-backs of the four corners of the tetrahedron.
\end{itemize}
Summing, we obtain $(2x^2 + 2)$ components in the fibre over $p$.

Finally, we consider a fibre of $\calX_g$ over a point $p$ with $g(p) = \infty$. Let $y$ denote the order of ramification of $g$ at $p$; by Proposition \ref{trivialcanonicalprop} we have $y \in \{1,2,4\}$. In each case, the fibre of $\calX_g$ over $p$ was computed explicitly in the proof of Proposition \ref{KXgtrivial}. In particular, we found that it has $31$ (resp. $11$, $1$) components when $y = 1$ (resp. $2$, $4$).

Summing over all fibres in $\Sigma$, we find that 
\[\sum_{p\in\Sigma} (\rho_p - 1) = \sum_{i=1}^{k}(2x_i^2 + 1) + c_1 + c_2,\]
where $x_i$ and $c_j$ are as in the statement of the proposition. Adding in $\rho_h + 1 = 20$ gives the desired result.
\end{proof}

\subsection{The Hodge number $h^{2,1}$}\label{h21sect}

Next we use Lemma \ref{lemma:fibers} to compute the Hodge number $h^{2,1}$. We begin by showing that this computation can be reduced to the calculation of the monodromy around the singular fibres.

Let $\mathbb{V}$ be an irreducible $\mathbb{Q}$-local system on a quasi-projective curve $U$ and let $j\colon U \hookrightarrow B$ be the canonical injection of $U$ into its smooth closure. Associated to $\mathbb{V}$ and a base point $p \in U$, we have a representation $\rho\colon \pi_1(U,p) \rightarrow \GL(\mathbb{V}_p)$, where $\mathbb{V}_p$ is the fibre of $\mathbb{V}$ at $p$. 

Denote the points in $B - U$ by $\{q_1,\ldots, q_s\}$. Via this representation, to each $q_i$ we may associate a local monodromy matrix $\gamma_i$, coming from a counterclockwise loop about $q_i$. This allows us to associate an integer
\[R(q_i) := \rank \mathbb{V}_p - \dim(\mathbb{V}_p^{\gamma_i})\]
to each $q_i$, where $\mathbb{V}_p^{\gamma_i}$ is the subspace of elements of $\mathbb{V}_p$ that are fixed under the action of $\gamma_i$

With this in place, we may compute $h^1(B,j_*\mathbb{V})$ using the following variation on Poincar\'{e}'s formula in classical topology, due to del Angel, M\"{u}ller-Stach, van Straten and Zuo \textup{\cite[Proposition 3.6]{hca1pfcy3f}}:
\begin{equation}\label{eq:katz} h^1(B,j_*\mathbb{V}) = \sum_{i=1}^n R(q_i) + 2(g(B) - 1) \rank(\mathbb{V}).\end{equation}

As a result of this formula and Lemma \ref{lemma:fibers}, if the singular fibres of $\pi$ satisfy the assumptions of Lemma \ref{lemma:fibers} and we know the local monodromy matrices, then we can easily deduce the Betti number $b_3(\calX)$ of a K3 surface fibred threefold $\calX$. These conditions are satisfied by the examples discussed in Section \ref{constructionsect}.

\begin{example}
\label{ex:X2}
Let $\mathcal{X}_2 \to \calM_{M_2}$ be the K3-fibred threefold discussed in Section \ref{SIsect}. Recall that $\mathcal{X}_2$ has three singular fibres, over the points $(q_1,q_2,q_3) = (0, \frac{1}{256}, \infty)$, and that the family of K3 surfaces over $U_{M_2} := \calM_{M_2} - \{q_1,q_2,q_3\}$ is an $M_2$-polarized family of K3 surfaces (in the sense of \cite[Definition 2.1]{flpk3sm}). 

If $\pi_U$ denotes the restriction of the fibration $\mathcal{X}_2 \to \calM_{M_2}$ to $U_{M_2}$, then $R^2(\pi_U)_*\mathbb{Q}$ is a $\Q$-local system on $U_{M_2}$. It is easy to see from the explicit description in the proof of Proposition \ref{KXgtrivial} that the singular fibres of $\mathcal{X}_2$ are either nodal K3 surfaces or normal crossings unions of smooth rational surfaces. Thus we may apply Lemma \ref{lemma:fibers} to deduce that $b_3(\mathcal{X}_2) = h^1( \calM_{M_2},\,j_*R^2(\pi_U)_*\mathbb{C})$, where $j\colon U_{M_2} \to \calM_{M_2}$ denotes the inclusion.

It remains to compute this cohomology. The discussion of \cite[Section 2.1]{flpk3sm} gives a splitting of $R^2(\pi_U)_*\mathbb{Q}$ as a direct sum of two irreducible $\Q$-local systems 
\[R^2(\pi_U)_*\mathbb{Q} = \mathcal{NS}(\mathcal{X}_2) \oplus \mathcal{T}(\mathcal{X}_2),\]
where $\mathcal{NS}(\mathcal{X}_2)$ consists of those classes which are in $\NS(X_p) \otimes \mathbb{Q}$ for every smooth fibre $X_p$ of $\mathcal{X}_2$, and $\mathcal{T}(\mathcal{X}_2)$ is the orthogonal complement of $\mathcal{NS}(\mathcal{X}_2)$. In our situation, $\mathcal{NS}(\mathcal{X}_2)$ is a trivial rank $19$ local system, and $\mathcal{T}(\mathcal{X}_2)$ is an irreducible local system of rank $3$. 

 We therefore have
\[H^1( \calM_{M_2},j_*R^2(\pi_U)_*\mathbb{Q}) = H^1( \calM_{M_2},j_*\mathcal{T}(\mathcal{X}_2))  \oplus H^1( \calM_{M_2},j_*\mathcal{NS}(\mathcal{X}_2)),\]
and triviality of $\mathcal{NS}(\mathcal{X}_2)$ gives
\[b_3(\calX_2) = h^1( \calM_{M_2},j_*R^2(\pi_U)_*\mathbb{C}) = h^1( \calM_{M_2},j_*\mathcal{T}(\mathcal{X}_2)).\]

We will compute this last cohomology group using Equation \eqref{eq:katz}. According to \cite[Section 5]{mmfk3sis3drp}, the Picard-Fuchs equation of the family of K3 surfaces $\mathcal{X}_2$ is hypergeometric of type $_3F_2(\frac{1}{4},\frac{2}{4},\frac{3}{4};1,1;256\lambda)$. From this, we may use a theorem of Levelt (\cite[Theorem 1.1]{leveltthesis}, see also \cite[Theorem 3.5]{mhff}) to compute that the global monodromy representation of $\mathcal{T}(\mathcal{X}_2) \otimes \mathbb{R}$ is given by the monodromy matrices
\[A = \left(\begin{array}{ccc}
0 & 0 & -1 \\
1 & 0 & -1 \\
0 & 1 & -1
\end{array}\right), \quad
B^{-1} = \left(\begin{array}{ccc}
3 & 1 & 0 \\
-3 & 0 & 1 \\
1 & 0 & 0
\end{array}\right),\quad
A^{-1}B = \left(\begin{array}{ccc}
1 & 0 & -4 \\
0 & 1 & 2 \\
0 & 0 & -1
\end{array}\right)\]
around $\lambda = \infty$, $\lambda = 0$ and $\lambda = \frac{1}{256}$ respectively (here the names of the matrices have been chosen to agree with \cite{mhff}).  Thus the local system $\mathcal{T}(\mathcal{X}_2) \otimes \mathbb{C}$ has local monodromy matrices given by 
\[\gamma_\infty = \left(\begin{matrix} \sqrt{-1} & 0 & 0 \\ 0 & -\sqrt{-1} & 0 \\ 0 & 0 & -1 \end{matrix} \right),\quad \gamma_0 = \left(\begin{matrix} 1 & 1 & 0 \\ 0 & 1 & 1 \\ 0 & 0 & 1 \end{matrix} \right),\quad \gamma_{\frac{1}{256}} = \left(\begin{matrix} -1 & 0 & 0 \\ 0 & 1 & 0 \\ 0 & 0 & 1 \end{matrix} \right)\]

Using Equation \eqref{eq:katz}, we can therefore compute that
\[b_3(\mathcal{X}_2)  = h^1( \calM_{M_2},j_*\mathcal{T}(\mathcal{X}_2))= 3 + 1 + 2 - 2\cdot 3 = 0\]
Local systems $\mathbb{V}$ satisfying $h^1(\mathbb{P}^1, j_*\mathbb{V})=0$ are called \emph{extremal local systems} in \cite{msfm}.
\end{example}

\begin{remark} Note that we choose not to use the explicit monodromy matrices computed in \cite[Section 4]{mmfk3sis3drp} for this calculation. This is because the method used to compute monodromy matrices in \cite{mmfk3sis3drp} contains a sign ambiguity, corresponding to the choice of primitive fourth root of unity in the transformation \cite[(4.1)]{mmfk3sis3drp}. Making the opposite choice has the effect of applying an antisymplectic involution on the fibres, which multiplies the monodromy matrices $\gamma_{\infty}$ and $\gamma_0$ by a factor of $-1$. As this sign is crucial in the computation of $R(0)$ and $R(\infty)$, we choose to avoid ambiguity and instead compute the monodromy matrices directly from the Picard-Fuchs equation.
\end{remark}

Next we consider the general case. Let $\pi_g\colon \calX_g \to \Proj^1$ be a K3 surface fibred Calabi-Yau threefold as in Corollary \ref{XgCY3} and suppose that $g^{-1}(\infty)$ consists of two points \textup{(}so that $l=2$\textup{)}. Recall that $\calX_g$ is defined by a degree $n$ cover $g\colon \mathbb{P}^1 \rightarrow \mathcal{M}_2$ with ramification profiles $[x_1,\dots,x_k]$, $[y_1,\dots,y_l]$ and $[z_1,\dots,z_m]$ over $\lambda = 0$, $\lambda =  \infty$ and $\lambda = \frac{1}{256}$ respectively, and ramification degree $r$ away from these three points. Let $U \subset \Proj^1$ be the preimage $g^{-1}(U_{M_2})$ and let $j\colon U \to \Proj^1$ denote the inclusion. 

Now, by the proof of Proposition \ref{KXgtrivial}, the singular fibres of $\calX_g$ are all either nodal K3 surfaces or normal crossings unions of smooth rational surfaces, so the argument of Example \ref{ex:X2} gives $b_3(\calX_g) = h^1(\Proj^1,j_*\mathcal{T}(\calX_g))$. But, by construction, the local system on $U$ given by $\mathcal{T}(\calX_g)$ is equal to $g^*\mathbb{V}$, where  $\mathbb{V}$ is the local system over $U_{M_2}$ given by $\mathcal{T}(\mathcal{X}_2)$. The cohomology of this local system is computed by:

\begin{proposition} Let $\mathbb{V}$ be the local system over $U_{M_2}$ given by $\mathcal{T}(\mathcal{X}_2)$.
We have 
\[h^1(\mathbb{P}^1, j_* g^*\mathbb{V}) = 2 + 2k + (m_{\mathrm{odd}} - n),\]
where $m_{\mathrm{odd}}$ denotes the number of $z_1,\dots, z_m$ which are odd.

In particular, if $g$ is unramified over $\lambda = \frac{1}{256}$, then $h^1(\mathbb{P}^1, j_* g^*\mathbb{V}) =2 + 2k$.
\end{proposition}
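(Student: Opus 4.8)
The plan is to apply the del Angel--M\"uller-Stach--van Straten--Zuo formula \eqref{eq:katz} directly to the rank-$3$ local system $g^*\mathbb{V}$ on $U = g^{-1}(U_{M_2})$, taking the base curve to be $\Proj^1$. Since $g(\Proj^1) = 0$ and $\rank(g^*\mathbb{V}) = \rank\mathbb{V} = 3$, the formula becomes
\[h^1(\Proj^1, j_* g^*\mathbb{V}) = \sum_{q} R(q) - 6,\]
where the sum runs over $q \in \Proj^1 \setminus U$. The first step is to identify exactly which points contribute: by definition $\Proj^1 \setminus U = g^{-1}\{0, \tfrac{1}{256}, \infty\}$, so only points over these three values appear. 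It is worth stressing here that any ramification point of $g$ lying over $U_{M_2}$ lies inside $U$ itself, and since $\mathbb{V}$ has trivial monodromy near its image, $g^*\mathbb{V}$ is locally constant there; such points therefore make no contribution to the sum, and the ramification degree $r$ does not enter. This reduces everything to three local monodromy computations.

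For each point $q$ over a special value, the local monodromy of $g^*\mathbb{V}$ is the power $\gamma^{e_q}$ of the corresponding monodromy matrix of $\mathbb{V}$ recorded in Example \ref{ex:X2}, where $e_q$ is the ramification index of $g$ at $q$, and $R(q) = 3 - \dim(\mathbb{V}_p^{\gamma^{e_q}})$. Over $\lambda = 0$ the matrix $\gamma_0$ is a single unipotent Jordan block: writing $\gamma_0 = I + N$ with $N^3 = 0$, one has $\gamma_0^{x} - I = xN + \binom{x}{2}N^2$, which has rank $2$ for every $x \geq 1$, so the fixed space is one-dimensional and $R = 2$ at each of the $k$ points over $0$, giving a contribution of $2k$. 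Over $\lambda = \infty$, where $\gamma_\infty = \mathrm{diag}(\sqrt{-1}, -\sqrt{-1}, -1)$ and $y_j \in \{1,2,4\}$ by Remark \ref{l=2rem}, a case check gives $R = 3, 2, 0$ for $y_j = 1, 2, 4$, that is $R = 4 - y_j$; summing over the two points with $y_1 + y_2 = n$ yields $8 - n$. Over $\lambda = \tfrac{1}{256}$, where $\gamma_{1/256} = \mathrm{diag}(-1,1,1)$, we have $\gamma_{1/256}^{z} = \mathrm{diag}((-1)^z, 1, 1)$, so $R = 1$ when $z$ is odd and $R = 0$ when $z$ is even, for a total of $m_{\mathrm{odd}}$.

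Adding the three contributions and subtracting $6$ gives
\[h^1(\Proj^1, j_* g^*\mathbb{V}) = 2k + (8-n) + m_{\mathrm{odd}} - 6 = 2 + 2k + (m_{\mathrm{odd}} - n),\]
which is the claimed identity. The final clause is then immediate: if $g$ is unramified over $\tfrac{1}{256}$ then $m = n$ and every $z_i = 1$, so $m_{\mathrm{odd}} = n$ and the last term vanishes.

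I expect the main obstacle to be bookkeeping rather than conceptual. The one point requiring genuine care is the computation over $\lambda = \infty$: the fact that the per-point contribution collapses to the clean value $R = 4 - y_j$, making the final answer independent of the precise partition $[y_1,y_2]$, is not evident in advance and must be confirmed separately for each of $y_j \in \{1,2,4\}$. A secondary subtlety, easy to overlook, is the justification that ramification of $g$ over $U_{M_2}$ genuinely contributes nothing; getting this wrong would erroneously reintroduce $r$ into the formula.
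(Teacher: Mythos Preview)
Your proof is correct and follows essentially the same route as the paper: apply formula \eqref{eq:katz} to $g^*\mathbb{V}$ on $\Proj^1$, read off the local contributions $R(q)$ from the explicit monodromy matrices of Example \ref{ex:X2} raised to the ramification index, and sum. The paper packages the per-point contributions as $R = 4 - \hcf(y,4)$ over $\infty$ and $R = 2 - \hcf(z,2)$ over $\tfrac{1}{256}$, which agree with your case-by-case values, and then simplifies using $y_1 + y_2 = n$ exactly as you do.
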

\begin{proof}
If $g$ ramifies to order $a$ at a point $q$ in $\mathbb{P}^1 - U$, then the monodromy of the pulled-back local system $g^*\mathbb{V}$ about $q$ is given by $\gamma_{g(q)}^a$ where $\gamma_{g(q)}$ is the monodromy matrix of $\mathcal{T}(\mathcal{X}_2)$ around $g(q)$. Therefore we can compute, using the explicit expressions for local monodromy found in Example \ref{ex:X2}, that
\begin{itemize}
\item if $g$ ramifies to order $y$ at a preimage of $\infty$, then $R(q) = 4-\hcf(y,4)$,
\item if $g$ ramifies to order $z$ at a preimage of $\frac{1}{256}$, then $R(q) = 2 - \hcf(z,2)$, and
\item at any preimage of $0$, we have $R(q) = 2$.
\end{itemize}
Thus we calculate
\begin{equation}
\label{eq:h21}
h^1(\mathbb{P}^1,j_*g^*\mathbb{V}) = \sum_{i=1}^l (4-\hcf(y_i,4)) + \sum_{j=1}^m (2-\hcf(z_j,2))  + 2k - 6.
\end{equation}

Now we impose the conditions of Proposition \ref{trivialcanonicalprop}. By assumption we have $l=2$, and $y_i = \hcf(y_i,4)$ for both $y_1$ and $y_2$. Equation \eqref{eq:h21} thus gives 
\begin{align*}
h^1(\mathbb{P}^1,j_*g^*\mathbb{V}) &= (4-y_1) + (4 - y_2) + \sum_{j=1}^m(2-\hcf(z_j,2)) + 2k - 6 \\ 
& = 2+ 2k +\left(m_\mathrm{odd} - n\right).
\end{align*}
Note that $(m_\mathrm{odd} - n) \leq 0 $ is an even number.
\end{proof}

Since $\calX_g$ is a Calabi-Yau threefold, we therefore have:

\begin{corollary}\label{h21cor}
Let $\calX_g$ be a Calabi-Yau threefold as in Corollary \ref{XgCY3} and suppose that $g^{-1}(\infty)$ consists of two points \textup{(}so that $l=2$\textup{)}. Then 
\[h^{2,1}(\mathcal{X}_g) = k + \left(\dfrac{m_\mathrm{odd} - n}{2}\right),\]
where $k$ denotes the number of ramification points of $g$ over $\lambda = 0$, $m_{\mathrm{odd}}$ denotes the number of ramification points of odd order of $g$ over $\lambda = \frac{1}{256}$, and $n$ is the degree of $g$.

Moreover, if $g$ is unramified over $\lambda = \frac{1}{256}$, then $h^{2,1}(\mathcal{X}_g) = k = r$, the degree of ramification of $g$ away from $\lambda \in \{0,\frac{1}{256},\infty\}$.
\end{corollary}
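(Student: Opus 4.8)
The plan is to combine the formula just proven, namely $h^1(\Proj^1, j_* g^*\mathbb{V}) = 2 + 2k + (m_{\mathrm{odd}} - n)$, with the standard Hodge-theoretic bookkeeping for Calabi-Yau threefolds. The bridge between the two is the identification, established in the discussion preceding the proposition, that $b_3(\calX_g) = h^1(\Proj^1, j_* \mathcal{T}(\calX_g)) = h^1(\Proj^1, j_* g^*\mathbb{V})$; this holds because the singular fibres of $\calX_g$ satisfy the hypotheses of Lemma \ref{lemma:fibers} (they are nodal K3s or normal crossings unions of smooth rational surfaces) and because the trivial summand $\mathcal{NS}(\calX_g)$ contributes nothing to $H^1$.

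First I would recall that for a Calabi-Yau threefold one has $b_3 = 2(h^{2,1} + h^{3,0}) = 2 h^{2,1} + 2$, since $h^{3,0} = h^{0,3} = 1$ and $H^3$ carries a pure Hodge structure of weight $3$ with $b_3 = h^{3,0} + h^{2,1} + h^{1,2} + h^{0,3}$ and $h^{2,1} = h^{1,2}$. Solving for $h^{2,1}$ gives $h^{2,1}(\calX_g) = \tfrac{1}{2} b_3(\calX_g) - 1$. Substituting $b_3(\calX_g) = 2 + 2k + (m_{\mathrm{odd}} - n)$ yields immediately
\[
h^{2,1}(\calX_g) = \frac{2 + 2k + (m_{\mathrm{odd}} - n)}{2} - 1 = k + \left(\frac{m_{\mathrm{odd}} - n}{2}\right),
\]
which is exactly the claimed formula. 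The parenthetical remark in the previous proposition that $m_{\mathrm{odd}} - n$ is a nonpositive even integer guarantees that this expression is a well-defined integer, so no divisibility issue arises.

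For the final ``moreover'' clause, I would observe that if $g$ is unramified over $\lambda = \tfrac{1}{256}$, then every $z_j = 1$, so $m = n$ and every $z_j$ is odd, giving $m_{\mathrm{odd}} = m = n$ and hence $m_{\mathrm{odd}} - n = 0$; thus $h^{2,1}(\calX_g) = k$. It then remains to show $k = r$ under this hypothesis. Here I would invoke Remark \ref{l=2rem}, which records that in the case $l = 2$ the trivial-canonical condition of Proposition \ref{trivialcanonicalprop} forces $k + m - n - r = 0$. When $g$ is unramified over $\lambda = \tfrac{1}{256}$ we have $m = n$, so this relation collapses to $k - r = 0$, i.e. $k = r$, completing the proof.

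The main obstacle is not in this corollary itself---which is a short deduction---but in ensuring that the hypotheses feeding into it genuinely hold: specifically that the identification $b_3(\calX_g) = h^1(\Proj^1, j_* g^*\mathbb{V})$ is legitimate, which relies on Lemma \ref{lemma:fibers} applying to every singular fibre and on the vanishing of $H^1$ of the trivial local system $\mathcal{NS}(\calX_g)$ on $\Proj^1$. Both of these are established in the surrounding text (the fibre types come from the proof of Proposition \ref{KXgtrivial}, and $j_* \mathcal{NS}$ of a trivial local system on a rational curve has no higher cohomology), so the corollary follows by assembling results already in hand rather than by any new argument.
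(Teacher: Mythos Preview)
Your proposal is correct and matches the paper's (implicit) argument: the corollary is stated without proof in the paper, as an immediate consequence of the identification $b_3(\calX_g) = h^1(\Proj^1, j_* g^*\mathbb{V})$ together with $b_3 = 2h^{2,1} + 2$ for a Calabi-Yau threefold, and you have spelled out exactly these steps. Your handling of the ``moreover'' clause via Remark~\ref{l=2rem} is also the intended deduction.
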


\begin{remark} \label{ijrem} We can now explain the general form for the generalized functional invariant maps $g$ of the Calabi-Yau threefolds fibred by $M_2$-polarized K3 surfaces listed in \cite[Theorem 5.10]{flpk3sm} (see Example \ref{ijex}). Indeed, in these cases $h^{2,1}(\calX_g) = 1$ by assumption so, if we assume that the map $g$ is unramified over $\lambda = \frac{1}{256}$ (which guarantees smoothness of $\calX_g$, by Proposition \ref{KXgtrivial}), then $k = r = 1$ by Corollary \ref{h21cor}. From this, we see that $g$ is totally ramified over $\lambda = 0$ and has a single ramification of degree $2$ away from $\lambda \in \{0,\frac{1}{256},\infty\}$. Moreover, if we write $[y_1,y_2] = [i,j]$, for some $i,j \in \{1,2,4\}$, then we must have $\deg(g) = n =  i+j$. 

This shows that, if $g$ is unramified over $\lambda = \frac{1}{256}$ and $h^{2,1}(\calX_g) = 1$, then the generalized functional invariant map $g\colon \Proj^1 \to \calM_{M_2}$ must have the form given in Example \ref{ijex}.
\end{remark}

To conclude this section, we demonstrate the application of this theory by calculating the Hodge numbers in our running example of the quintic mirror threefold:

\begin{example} Recall from Example \ref{ijex} that the fibration of the quintic mirror threefold by $M_2$-polarized K3 surfaces has generalized functional invariant $g$ with  $(k,l,m,n,r) = (1,2,5,5,1)$, $[x_1] = [5]$, $[y_1,y_2] = [1,4]$, and $[z_1,\ldots,z_5] = [1,1,1,1,1]$. The Hodge numbers of this threefold are well known; here we illustrate how to recover them from the results above.

Firstly, we have $h^{2,1}(\calX_g) = k = 1$, by Corollary \ref{h21cor}. Moreover, by Proposition \ref{Xgh11}, we have
\[h^{1,1}(\calX_g) = 20 + (2x_1^2 + 1) + c_1 + c_2 = 20 + 51 + 30 + 0 = 101\]
as expected.
\end{example}

\section{Deformations and Moduli Spaces}\label{defsect}

Now consider the setting where $g$ is unramified over the point $\lambda = \frac{1}{256}$ and has only simple ramification points away from $\lambda \in \{0,\frac{1}{256},\infty\}$. In this case Corollary \ref{h21cor} raises an obvious question. It is easy to see that, in this setting, $r$ is equal to the number of simple ramification points of $g$ away from $\lambda \in \{0,\frac{1}{256},\infty\}$. Moreover, for the corresponding threefolds $\calX_g$, we also have $h^{2,1}(\calX_g) = r$. So to what extent are small deformations of $\calX_g$ determined by the locations of these simple ramification points? 

In more generality, we may ask to what extent small deformations of the threefold $\calX_g$ are determined by deformations of the map $g$. In fact, we find:

\begin{proposition} \label{prop:deffib}
Let $\calX_g$ be a Calabi-Yau threefold as in Corollary \ref{XgCY3} and suppose that $g^{-1}(\infty)$ consists of two points \textup{(}so that $l=2$\textup{)}. Moreover, suppose that $g$ is unramified over  $\lambda = \frac{1}{256}$. Then any small deformation of $\mathcal{X}_g$ is obtained by deforming the map $g$ in a way that preserves the ramification profiles over $\lambda \in \{0,\infty\}$.
\end{proposition}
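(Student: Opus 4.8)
The plan is to compare the deformation theory of $\calX_g$ as an abstract Calabi-Yau threefold with the deformation theory of the generalized functional invariant map $g\colon \Proj^1 \to \calM_{M_2}$, and to show the natural map between these is an isomorphism. Since $\calX_g$ is a smooth Calabi-Yau threefold (by Corollary \ref{XgCY3} and the smoothness hypothesis that $g$ is unramified over $\lambda = \frac{1}{256}$), its small deformations are unobstructed by the Bogomolov-Tian-Todorov theorem, and the tangent space to the Kuranishi family is $H^1(\calX_g, T_{\calX_g})$, which is dual to $H^1(\calX_g,\Omega_{\calX_g})^{\vee}\otimes \cdots$; more usefully, for a Calabi-Yau threefold $\dim H^1(\calX_g,T_{\calX_g}) = h^{2,1}(\calX_g)$. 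By Corollary \ref{h21cor}, under the stated hypotheses this equals $k$, the number of ramification points of $g$ over $\lambda = 0$. So the first step is to record that the deformation space of $\calX_g$ has dimension exactly $k$.

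Next I would identify the deformations of $g$ that preserve the ramification profiles over $\lambda\in\{0,\infty\}$. The map $g$ is a degree $n$ cover of $\calM_{M_2}\cong\Proj^1$ with prescribed ramification $[x_1,\dots,x_k]$ over $0$, $[y_1,y_2]$ over $\infty$, and (by hypothesis) trivial ramification over $\frac{1}{256}$, together with the remaining simple ramification away from these three points encoded by $r$. Deformations of $g$ fixing the profiles over $0$ and $\infty$ but allowing the free branch points to move are parametrized by a Hurwitz space; the dimension of this space is governed by the Riemann-Hurwitz count. Using Remark \ref{l=2rem}, which gives $k+m-n-r=0$, one computes that the number of moduli of such covers (modulo automorphisms of the source and target $\Proj^1$) matches $k$. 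I would make this comparison precise, so that the space of admissible deformations of $g$ also has dimension $k$.

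The core of the argument is then to construct a natural map from the admissible deformations of $g$ to deformations of $\calX_g$ and show it is an isomorphism. Given a deformation $g_t$ of $g$ preserving the ramification profiles over $\lambda\in\{0,\infty\}$, the pullback construction of Section \ref{SIconstructionsect} — normalize $g_t^*\bar{\calX}_2$ and apply the crepant resolution of Proposition \ref{KXgtrivial} — produces a family $\calX_{g_t}$ of Calabi-Yau threefolds deforming $\calX_g$. The key point is that the resolution data are \emph{locally constant} in $t$ precisely because the ramification profiles over $0$ and $\infty$ are held fixed and nothing degenerates over $\frac{1}{256}$ (this is exactly where the hypotheses are used: the local analytic type of the singularities being resolved, hence the combinatorics of the toric resolutions in Proposition \ref{KXgtrivial}, does not jump). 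This yields a morphism from the Hurwitz deformation space to the Kuranishi space of $\calX_g$, and by Theorem \ref{gfithm} it is injective, since a nontrivial deformation of $g$ gives a genuinely different $M_2$-polarized family and hence a non-isomorphic threefold. As both sides have dimension $k$ and the Kuranishi space is smooth, injectivity of the induced map on tangent spaces forces it to be an isomorphism, proving that every small deformation of $\calX_g$ arises this way.

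The main obstacle I anticipate is the construction of the comparison map at the level of deformation functors, and in particular the verification that the resolution of Proposition \ref{KXgtrivial} can be carried out \emph{in families} over the Hurwitz base. One must check that the six sections $C_i$ of $cA_3$ singularities, the fibre-over-$0$ configuration resolved by Friedman's method \cite{bgd7}, and the fibre-over-$\infty$ configuration all deform flatly and admit simultaneous crepant resolutions as the branch points move, without the singularity types changing. Controlling this requires that the map $g$ stays transverse to the three special values and that its local ramification behavior over $0$ and $\infty$ is rigid; the hypothesis that $g$ is unramified over $\frac{1}{256}$ removes the one source of singularities in $\calX_g$ that could obstruct smoothness, and the $l=2$ assumption (via Remark \ref{l=2rem}) excludes the pathological $y_1=8$ case. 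Once this family resolution is in hand, the dimension count and Theorem \ref{gfithm} close the argument cleanly.
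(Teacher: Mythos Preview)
Your overall architecture matches the paper's: compute that $h^{2,1}(\calX_g)=k$ via Corollary \ref{h21cor}, compute that the space of maps $g$ with the prescribed ramification over $\{0,\infty\}$ is $k$-dimensional, and then argue that the map from the latter to the former is an isomorphism. The paper even makes the Hurwitz count concrete by normalizing so that the two preimages of $\infty$ are $(1:0)$ and $(0:1)$ and one preimage of $0$ is $(1:1)$, writing
\[
g\colon (s:t)\longmapsto \frac{a_1(s-t)^{x_1}\prod_{i=2}^k(s-a_it)^{x_i}}{s^{y_1}t^{y_2}},
\]
which exhibits the $k$ parameters directly.

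Where your argument has a genuine gap is the injectivity step. You write that ``by Theorem \ref{gfithm} it is injective, since a nontrivial deformation of $g$ gives a genuinely different $M_2$-polarized family and hence a non-isomorphic threefold.'' The first half is correct: Theorem \ref{gfithm} does say that $g$ determines the \emph{fibration} $(\calX_g,\pi_g)$ up to isomorphism. But the ``hence'' is unjustified: nothing in Theorem \ref{gfithm} prevents the same abstract threefold $\calX_g$ from carrying a continuous family of distinct $M_2$-polarized K3 fibrations, in which case distinct nearby $g$'s would map to the same point of the Kuranishi space and your map would fail to be injective. Theorem \ref{gfithm} is a statement about fibrations, not about underlying varieties.

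The paper closes exactly this gap with an external input you do not mention: Oguiso's result \cite[Example 2.3]{ffsscy3f} that K3 fibrations on a fixed Calabi-Yau threefold correspond to certain rational rays in its nef cone, so there are at most countably many of them. This forces the forgetful map $\{(\calX_g,\pi_g)\}\to\{\calX_g\}$ to have discrete fibres; combined with the dimension equality on both sides, the claim follows. Without this (or an equivalent rigidity statement for the fibration), your dimension count alone does not finish the proof. Your discussion of resolving in families is a reasonable technical point, but it is orthogonal to this issue and does not substitute for the missing countability input.
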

\begin{proof}
Suppose that $\pi_g\colon \calX_g \to \Proj^1$ and $\pi_{g'}\colon \calX_{g'} \to \Proj^1$ are two Calabi-Yau threefolds defined by maps $g,g' \colon \Proj^1 \to \calM_{M_2}$ satisfying the assumptions of the proposition. We say that an isomorphism $\varphi\colon \mathcal{X}_g \rightarrow \mathcal{X}_{g'}$ is an isomorphism of fibrations between $(\mathcal{X}_g,\pi_g)$ and $(\mathcal{X}_{g'},\pi_{g'})$ if there is a commutative diagram
\[ \xymatrix{
\mathcal{X}_g \ar[rd]_{\pi_g} \ar[rr]^\varphi && \mathcal{X}_{g'} \ar[ld]^{\pi_{g'}}\\
&\mathbb{P}^1 }\]
Such an isomorphism exists if and only if there is an automorphism $\phi\colon \Proj^1 \to \Proj^1$ so that
\[ \xymatrix{
\mathbb{P}^1 \ar[rd]_{g} \ar[rr]^\phi &&\mathbb{P}^1 \ar[ld]^{g'}\\
&{\mathcal{M}}_{{M}_2}  }\]

Assume that $g$ has ramification profile $[y_1,y_2]$ over $\lambda = \infty$ and $[x_1,\ldots,x_k]$ over $\lambda = 0$. By applying an automorphism $\phi$ of $\Proj^1$ as above, we may assume that the ramification points over $\lambda = \infty$ are $(1:0)$ and $(0:1)$, and that $(1:1)$ is a ramification point over $\lambda = 0$ with ramification index $x_1$. Then $g$ may be written as
\[g\colon (s:t) \longmapsto \frac{a_1(s-t)^{x_1}\prod_{i=2}^k (s-a_it)^{x_i}}{s^{y_1}t^{y_2}},\]
with parameters $a_1,\dots,a_k \in (\mathbb{C}^\times)^k - \Delta$ where $\Delta$ is the union of the big diagonals in $(\mathbb{C}^\times)^k$. 

Thus there is an $k$-dimensional space of maps $g\colon \Proj^1 \to \calM_{M_2}$ with the property that $g$ has ramification profile $[y_1,y_2]$ over $\lambda = \infty$ and $[x_1,\ldots,x_k]$ over $\lambda = 0$. By the discussion above, this means that the space of local deformations of the fibration $(\mathcal{X}_g,\pi_g)$ is also $k$-dimensional. 

Now, by a result of Oguiso \cite[Example 2.3]{ffsscy3f}, K3 fibrations on $\mathcal{X}_g$ correspond to certain rational rays in the nef cone of $\mathcal{X}_g$ so, in particular, there are at most countably many  K3 fibrations on $\mathcal{X}_g$. This means that we cannot continuously vary the K3 fibration on $\mathcal{X}_g$ without deforming $\mathcal{X}_g$ itself. Since the K3 fibration $(\mathcal{X}_g,\pi_g)$ may be deformed in $k$ different directions and the deformation space of $\calX_g$ is $k$-dimensional (by Corollary \ref{h21cor}), the claim follows.
\end{proof}

\begin{remark}
From the proof of this proposition the reader may note that, under the assumptions that $g^{-1}(\infty)$ consists of two points and $g$ is unramified over  $\lambda = \frac{1}{256}$, an open subset of the moduli space of K3-fibred Calabi-Yau threefolds $(\mathcal{X}_g,\pi_g)$ is identified with a moduli space of maps $g$ with fixed ramification profiles over $\{0,\frac{1}{256},\infty\}$. Moduli spaces of such maps are called \emph{Hurwitz spaces} and have been studied extensively in the literature.
\end{remark}

Next, we will show that any Calabi-Yau threefold $\calX_g$ is deformation equivalent to a Calabi-Yau threefold $\calX_{g'}$ defined by a map $g'\colon \Proj^1 \to \calM_{M_2}$ with only simple ramification away from $\lambda \in \{0,\frac{1}{256},\infty\}$. In particular this shows that, if we are only interested in generic members of deformation classes, we can safely ignore the type of ramification away from  $\lambda \in \{0,\frac{1}{256},\infty\}$.

\begin{proposition}
\label{prop:simple}
Let $g\colon \Proj^1 \to \calM_{M_2}$ be a morphism. Then there exists a deformation $g'\colon \Proj^1 \to \calM_{M_2}$ of $g$ that has the same ramification profiles as $g$ over $\lambda \in \{0,\frac{1}{256},\infty\}$ and only simple ramification away from these points.

Thus, if $\calX_g$ is a Calabi-Yau threefold as in Corollary \ref{XgCY3}, then $\calX_g$ is deformation equivalent to a Calabi-Yau threefold $\calX_{g'}$ defined by a map $g'\colon \Proj^1 \to \calM_{M_2}$ that is simply ramified away from $\lambda \in \{0,\frac{1}{256},\infty\}$.
\end{proposition}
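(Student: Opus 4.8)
The plan is to regard $g$ as a degree-$n$ branched cover $g\colon \Proj^1 \to \calM_{M_2}\cong\Proj^1$ and to show that the critical points of $g$ lying away from the three special fibres can be simultaneously \emph{Morsified} within the family of maps that keeps the ramification over $\lambda\in\{0,\frac1{256},\infty\}$ fixed. Concretely, let $\mathcal{H}$ denote the space of degree-$n$ maps $\Proj^1\to\calM_{M_2}$ whose preimage divisors over $0$, $\frac1{256}$ and $\infty$ have the same multiplicity profiles $[x_1,\dots,x_k]$, $[z_1,\dots,z_m]$ and $[y_1,\dots,y_l]$ as those of $g$ (this is the Hurwitz space appearing in the remark after Proposition \ref{prop:deffib}). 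First I would identify the tangent and obstruction spaces of $\mathcal{H}$ at $g$. A first-order deformation of $g$ is a section of $g^*T_{\calM_{M_2}}\cong\calO_{\Proj^1}(2n)$, and the requirement that it preserve an order-$x_i$ contact with $\lambda=0$ at a point $p_i$ (and similarly over $\frac1{256}$ and $\infty$) forces this section to vanish to order $x_i-1$ there. Thus $T_g\mathcal{H}=H^0(\Proj^1, g^*T_{\calM_{M_2}}(-D))$ and the obstructions lie in $H^1(\Proj^1,g^*T_{\calM_{M_2}}(-D))$, where $D$ is the ramification divisor of $g$ over the three special values, of degree $(n-k)+(n-l)+(n-m)$.

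The key numerical observation is that $\deg\big(g^*T_{\calM_{M_2}}(-D)\big)=2n-\big(3n-(k+l+m)\big)=k+l+m-n=r+2$, using the relation $k+l+m-n-r=2$ from Proposition \ref{trivialcanonicalprop}. Since $r\geq 0$, this degree is at least $2$, so $H^1\big(\Proj^1,\calO(r+2)\big)=0$: the space $\mathcal{H}$ is smooth at $g$, of dimension $h^0(\calO(r+2))=r+3$. Moreover $\calO(r+2)$ separates jets generously on $\Proj^1$: the evaluation map $H^0(\Proj^1,\calO(r+2))\to\bigoplus_{p}\calO(r+2)/\mathfrak{m}_p^{\,e_p-1}$ at the ramification points $p$ lying away from the three special fibres is surjective, because $\sum_{p}(e_p-1)=r\leq(r+2)+1$. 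I would package this as the statement that the natural map from $T_g\mathcal{H}$ to the product of the local versal deformation spaces of the critical points of $g$ away from $g^{-1}\{0,\frac1{256},\infty\}$ is surjective.

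With these two facts in hand the proof concludes by standard singularity theory. Each such critical point is locally modelled on $z\mapsto z^{e_p}$, whose miniversal unfolding has generic member a Morsification, i.e. a map with $e_p-1$ simple critical points carrying distinct critical values; the surjectivity above realises an arbitrary such unfolding, simultaneously at all the critical points, by an actual deformation inside the smooth space $\mathcal{H}$. A further general choice separates all the resulting critical values from one another and from $\{0,\frac1{256},\infty\}$. Taking $g'$ to be such a general member of $\mathcal{H}$ near $g$ then yields a map with the same three ramification profiles as $g$ but only simple ramification elsewhere, and $g'$ is by construction a deformation of $g$ (the connecting arc lies in $\mathcal{H}$). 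Because the deformation is supported in small discs away from $g^{-1}\{0,\frac1{256},\infty\}$, the profiles over the three special values are preserved exactly, not merely their partition types.

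Finally, for the Calabi-Yau statement: if $\calX_g$ is as in Corollary \ref{XgCY3}, then by Proposition \ref{KXgtrivial} its smoothness forces $g$ to be unramified over $\lambda=\frac1{256}$, i.e. $m=n$. The deformation above preserves this profile, so every intermediate map is unramified over $\frac1{256}$ and hence every corresponding threefold is smooth by Proposition \ref{KXgtrivial}. Since the singularity types resolved in the construction of Proposition \ref{KXgtrivial} depend only on the (locally constant) ramification profiles over the three special fibres, that construction runs in families and produces a smooth family of Calabi-Yau threefolds over the connected base; thus $\calX_g$ and $\calX_{g'}$ are deformation equivalent. The main obstacle is the globalisation carried out in the first two paragraphs---showing that the purely local Morsifications can be integrated into a single global deformation of $g$ that leaves all three profiles fixed---which hinges on the fact that the twist $g^*T_{\calM_{M_2}}(-D)$ has degree exactly $r+2$, just large enough to kill obstructions and to separate the required jets.
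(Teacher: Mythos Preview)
Your argument is correct and takes a genuinely different route from the paper's. The paper works topologically via the Riemann existence theorem: at a branch point $p_i\notin\{0,\frac1{256},\infty\}$ with non-transposition monodromy $\sigma_i\in S_n$, it factors $\sigma_i=\tau_1\cdots\tau_{s'}$ into transpositions, replaces $p_i$ by nearby branch points $q_1,\ldots,q_{s'}$ carrying monodromies $\tau_j$, and invokes Riemann existence to build the new cover; letting the $q_j$ coalesce to $p_i$ recovers $g$. Your approach is deformation-theoretic: you compute the tangent/obstruction bundle of the Hurwitz stratum as $g^*T_{\calM_{M_2}}(-D)\cong\calO_{\Proj^1}(r+2)$, deduce smoothness from $H^1=0$, and use jet-separation for $\calO(r+2)$ to globalise the local Morsifications. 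The paper's proof is more elementary and explicitly constructive; yours dovetails with the Hurwitz-space viewpoint flagged in the remark after Proposition~\ref{prop:deffib} and makes the numerology transparent. Two small points: first, the identity $k+l+m-n-r=2$ you invoke is simply Riemann--Hurwitz for $g\colon\Proj^1\to\Proj^1$ (it is Equation~\eqref{basecovereq}) and holds for every such $g$, not only in the Calabi--Yau case, so your first paragraph does apply in the full generality of the statement. Second, Proposition~\ref{KXgtrivial} only proves that unramifiedness over $\lambda=\frac1{256}$ implies smoothness of $\calX_g$, not the converse you assert; but your subsequent ``construction runs in families'' sentence is already the correct argument (and is essentially the paper's), so this overclaim is inessential to your conclusion.
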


\begin{remark} We note that this result is not unexpected: neither our computation of $h^{1,1}(\mathcal{X}_g)$ nor our computation of $h^{2,1}(\mathcal{X}_g)$ made any reference to the type of ramification away from $\lambda \in \{0,\frac{1}{256},\infty\}$, so we should not expect such ramification to affect the deformation type of $\calX_g$. 
\end{remark}

\begin{proof} Assume that $g$ has degree $n$ and let $\Sigma = \{p_1,\dots,p_s\}$ be the set of branch points of $g$ in $\calM_{M_2}$. Choose a set of discs $\Delta_i$ around each $p_i \in \Sigma$, small enough that no pair of discs intersects, and choose non-intersecting paths $\beta_i$ from a base-point $p \in \Proj^1$ to the boundary of each $\Delta_i$. For each $\Delta_i$, let $\gamma_i$ be the path obtained by following the path $\beta_i$ from $p$ to the boundary of $\Delta_i$, going around $\partial \Delta_i$ once counterclockwise, then traversing $\beta_i$ backwards to $p$. 

The classes of $\gamma_i$ generate $\pi_1(p\,,\,\calM_{M_2} - \Sigma)$ and the concatenation $\gamma_1 \cdots  \gamma_s$ is a contractible loop. Label the points above $p$ by the integers $\{1,\ldots,n\}$. Then to each point $p_i \in \Sigma$, we may associate an element $\sigma_i \in S_n$ which describes the action of monodromy around $\gamma_i$ on the points over $p$. This monodromy representation determines $g$ up to reordering of the points over $p$. Since $\Proj^1$ is connected, the subgroup of $S_n$ generated by $\{\sigma_1,\ldots,\sigma_s\}$ is transitive.

If $g$ is not simply ramified away from $\lambda \in \{0,\frac{1}{256}, \infty\}$, then there exists a $p_i \notin \{0,\frac{1}{256},\infty\}$ so that the corresponding $\sigma_{i}$ is not a transposition. Let $\sigma_{i} = \tau_1 \cdots \tau_{s'}$ be a minimal decomposition of such a $\sigma_{i}$ into transpositions. Then we claim that the set $P' = \{\sigma_1 ,\dots, \sigma_{{i-1}}, \tau_1,\dots, \tau_{s'},\sigma_{{i+1}},\dots, \sigma_{s}\}$ can be used to define a new cover $g'\colon \Proj^1 \to \calM_{M_2}$, so that the points $q_j$ over which $g'$ ramifies with cycle structure $\tau_j$ have only simple ramification. 

To define this cover, let $t$ be a complex coordinate on the disc $\Delta_i$, chosen so that $\Delta_i = \{t \in \C \mid |t|<1\}$ and $p_i$ lies at $t = 0$. Let $q$ denote the point where the path $\beta_i$ meets the boundary of $\Delta_i$. Take points $q_1,\dots , q_{s'}\in \Delta_i$ and define non-intersecting loops $\delta_j$ from $q$ around each $q_j$. Then let $\gamma'_j$ denote the path obtained by following the path $\beta_i$ from $p$ to $q$, going around $\delta_j$ once counterclockwise, then traversing $\beta_i$ backwards to $p$. After relabelling if necessary, we may assume that $\gamma_i = \gamma'_1\cdots\gamma'_{s'}$ in $\pi_1(p\,,\,\calM_{M_2} - \Sigma')$, where $\Sigma' := \{p_1,\dots, p_{i-1},q_1,\dots, q_{s'}, p_{i+1},\dots p_s\}$.

By construction, the set $\{\gamma_1,\ldots,\gamma_{i-1},\gamma'_1,\ldots,\gamma_{s'}',\gamma_{i+1},\ldots,\gamma_s\}$ forms a basis for $\pi_1(p\,,\,\calM_{M_2} - \Sigma')$. Assign the set $P'$ of elements of $S_n$ to these loops by associating $\sigma_i$ to $\gamma_i$ and $\tau_j$ to $\gamma'_j$. This defines a representation  $\rho\colon\pi_1(p\,,\,\calM_{M_2}- \Sigma') \to S_n$ whose image is transitive by construction so, by the Riemann existence theorem, there is a unique connected curve $C$ and morphism $g'\colon C \rightarrow\calM_{M_2}$, such that $g'$ is branched over $\Sigma'$ and $\rho$ is the monodromy representation of the associated Galois cover of $\calM_{M_2}- \Sigma'$. Using the Riemann-Hurwitz formula, it is easy to check that $C \cong \Proj^1$

Now take a deformation $g_t'$ of $g'$, obtained by multiplying $q_1,\ldots, q_{s'}$ by the local coordinate $t$, and an appropriate deformation of the loops $\gamma_j'$. At $t=0$, the points $q_j$ all go to $p_i$ and the map $g$ degenerates to a map $g'_0$ whose monodromy about $p_i$ is $\tau_1\cdots \tau_{s'} = \sigma_i$. By the uniqueness part of the Riemann existence theorem, the map $g'_0$ is exactly $g$. 

We may now repeat this procedure for each $p_i \neq \{0,\frac{1}{256},\infty\}$ over which the corresponding ramification of $g$ is not simple, to obtain a map $g'\colon \Proj^1 \to \calM_{M_2}$ that deforms to $g$ and has simple ramification away from $\{0,\frac{1}{256},\infty\}$.

Given this, the statement about the threefolds $\calX_g$ and $\calX_{g'}$ follows from the fact that the deformation $g \leadsto g'$ induces a deformation $\calX_g \leadsto \calX_{g'}$. As this deformation does not affect a neighbourhood of the fibres above $\lambda \in \{0,\frac{1}{256},\infty\}$, we see that $\calX_{g'}$ must also be Calabi-Yau.
\end{proof}

We conclude this section by asking what happens to the threefolds $\calX_g$ when the map $g$ degenerates. Such degenerations occur when a ramification point away from $\lambda \in \{0,\frac{1}{256},\infty\}$ moves to one of these points. In this situation it is easy to see what occurs: the ramification profile defining $\calX_g$ changes and the threefold becomes singular. If the new ramification profile defines a smooth Calabi-Yau (according to Corollary \ref{XgCY3}), then this singular threefold admits a Calabi-Yau resolution, with new Hodge numbers given by Proposition \ref{Xgh11} and Corollary \ref{h21cor}. Geometrically, the Calabi-Yau threefold $\calX_g$ undergoes a \emph{geometric transition} to a new Calabi-Yau threefold with different Hodge numbers.

\begin{example} \label{moduliex} In our running example of the quintic mirror threefold, the generalized functional invariant $g$ has one simple ramification away from $\lambda \in \{0,\frac{1}{256},\infty\}$. As noted above, moving this ramification point corresponds to deforming the quintic mirror in its ($1$-dimensional) complex moduli space. This can also be seen from the explicit form of the generalized functional invariant given in Example \ref{ijex}, where varying the modular parameter $A$ changes the location of this simple ramification point, whilst keeping the other ramification points fixed.

It is easy to see what happens when this simple ramification point moves to $\lambda \in \{0,\frac{1}{256},\infty\}$. At $\lambda = \frac{1}{256}$ (corresponding to $A = \frac{1}{5^5}$), the proof of Proposition \ref{KXgtrivial} shows that $\calX_g$ acquires a single isolated $cA_1$ (node) singularity. Moreover, by \cite[Lemma 3.5]{gscyt} and \cite[Theorem 2.5]{slmcyt}, the resulting singular threefold is $\Q$-factorial, so does not admit a crepant resolution. In particular, this degeneration provides an example of a map $g$ that satisfies the conditions of Proposition \ref{KXgtrivial} but does \emph{not} give rise to a smooth Calabi-Yau threefold.

When the simple ramification point moves to $\lambda = \infty$ (corresponding to $A = \infty$), the map $g$ becomes totally ramified over $\lambda \in \{0,\infty\}$. The degenerate threefold acquires an additional $\Z/5\Z$ action, which acts to permute the sheets of this cover.

Finally, when the simple ramification point moves to $\lambda = 0$ (corresponding to $A = 0$), we obtain a degeneration with \emph{maximally unipotent monodromy} (see \cite[Chapter 6]{msag}).

As we can see, we have obtained the three well-known boundary points in the complex moduli space of the quintic mirror threefold.
\end{example}

\bibliography{Books}
\bibliographystyle{amsalpha}
\end{document}